\newtheorem{theorem}{Theorem}[section]
\newtheorem{corollary}[theorem]{Corollary}
\newtheorem{lemma}[theorem]{Lemma}
\newtheorem{proposition}[theorem]{Proposition}
\theoremstyle{definition}
\newtheorem{example}[theorem]{Example}
\newtheorem*{ThmA}{Theorem A}
\newtheorem*{ThmB}{Theorem B}
\newtheorem*{ThmC}{Theorem C}
\newenvironment{enumeratei}{\begin{enumerate}[\upshape (a)]}
    {\end{enumerate}}
\def\irr#1{{\rm Irr}(#1)}
\def\cent#1#2{{\bf C}_{#1}(#2)}
\def\nor{\trianglelefteq\,}
\def\zent#1{{\bf Z}(#1)}
\def\sbs{\subseteq}
\def\aut#1{{\rm Aut}(#1)}
\def\out#1{{\rm Out}(#1)}
\def\fit#1{{\bf F}(#1)}
\def\frat#1{{\bf \Phi}(#1)}
\newcommand{\N}{{\mathbb N}}
\newcommand{\pr}{{\mathbb P}}
\def\irr#1{{\rm Irr}(#1)}
\def\m#1{{\rm m}(#1)}
\def\cent#1#2{{\bf C}_{#1}(#2)}
\def\nor{\trianglelefteq\,}
\def\norm#1#2{{\bf N}_{#1}(#2)}
\def\zent#1{{\bf Z}(#1)}
\def\sbs{\subseteq}
\def\aut#1{{\rm Aut}(#1)}
\def\out#1{{\rm Out}(#1)}
\def\fit#1{{\bf F}(#1)}
\def\fitg#1{{\bf F^*}(#1)}
\def\PSL#1{{\rm PSL}_{2}(#1)}
\def\E#1{{\bf E}(#1)}
\def\b#1{\overline{#1}}
\def\m#1{{\rm m}(#1)}
\def\irr#1{{\rm Irr}(#1)}
\def\cd#1{{\rm cd}(#1)}
\def\cent#1#2{{\bf C}_{#1}(#2)}
\def\zent#1{{\bf Z}(#1)}
\def\norm#1#2{{\bf N}_{#1}(#2)}
\def\sbs{\subseteq}
\def \o#1{\overline{#1}}
\mathchardef\coso="2023
\begin{document}

\title{On Huppert's rho-sigma conjecture}

\author[Z. Akhlaghi et al.]{Zeinab Akhlaghi}
\address{Zeinab Akhlaghi, Faculty of Math. and Computer Sci., \newline Amirkabir University of Technology (Tehran Polytechnic), 15914 Tehran, Iran.\newline
School of Mathematics,
Institute for Research in Fundamental Science(IPM)
P.O. Box:19395-5746, Tehran, Iran.}
\email{z\_akhlaghi@aut.ac.ir}

\author[]{Silvio Dolfi}
\address{Silvio Dolfi, Dipartimento di Matematica e Informatica U. Dini,\newline
Universit\`a degli Studi di Firenze, viale Morgagni 67/a,
50134 Firenze, Italy.}
\email{silvio.dolfi@unifi.it}

\author[]{Emanuele Pacifici}
\address{Emanuele Pacifici, Dipartimento di Matematica F. Enriques,
\newline Universit\`a degli Studi di Milano, via Saldini 50,
20133 Milano, Italy.}
\email{emanuele.pacifici@unimi.it}

\thanks{
The first author was partially supported by a grant from IPM (No. 99200028). The second and the third authors were partially supported by INDAM-GNSAGA}

\subjclass[2000]{20C15}

\begin{abstract} 
For an irreducible complex character \(\chi\) of the finite group \(G\), let \(\pi(\chi)\) denote the set of prime divisors of the degree \(\chi(1)\) of \(\chi\). Denote then by \(\rho(G)\) the union of all the sets \(\pi(\chi)\) and by \(\sigma(G)\) the largest value of \(|\pi(\chi)|\), as \(\chi\) runs in \(\irr G\). The \emph{$\rho$-$\sigma$ conjecture}, formulated by Bertram Huppert in the 80's, predicts that \(|\rho(G)|\leq 3\sigma(G)\) always holds, whereas \(|\rho(G)|\leq 2\sigma(G)\) holds if \(G\) is solvable; moreover, O. Manz and T.R. Wolf proposed a ``strengthened" form of the conjecture in the general case, asking whether \(|\rho(G)|\leq 2\sigma(G)+1\) is true for every finite group \(G\). In this paper we study the strengthened $\rho$-$\sigma$ conjecture \emph{for the class of finite groups having a trivial Fitting subgroup}: in this context, we prove that the conjecture is true provided \(\sigma(G)\leq 5\), but it is false in general if \(\sigma(G)\geq 6\). Instead, we establish that \(|\rho(G)|\leq 3\sigma(G)-4\) holds for every finite group with a trivial Fitting subgroup and with \(\sigma(G)\geq 6\) (this being the right, best possible bound). Also,  we improve the up-to-date best bound for the solvable case, showing that we have \(|\rho(G)|\leq 3\sigma(G)\) whenever \(G\) belongs to one particular class including all the finite solvable groups.    
\end{abstract}

\maketitle
\section{Introduction}
The set \(\cd G=\{\chi(1)\mid\chi\in\irr G\}\) consisting of the degrees of the irreducible complex characters of a finite group \(G\) has been an object of considerable interest since the second part of the \(20^{\rm{th}}\) century, and the study of the arithmetical structure of this set is a particularly intriguing aspect of Character Theory of finite groups (see for instance \cite{L}).  
A remarkable question in this research area was posed by Bertram Huppert in the 80's: is it true that  at least one of the  character degrees  is divisible by a ``large" portion  of the entire set of primes that  appear as divisors of some character degree?
More precisely, denoting by $\pi(n)$  the set of prime divisors of an integer $n$, and writing for short
$\pi(\chi)$ instead of $\pi(\chi(1))$ when $\chi \in \irr G$, one defines
$$\rho(G) = \bigcup_{\chi \in \irr G} \pi(\chi)$$
and
$$ \sigma(G) = \max \{ |\pi(\chi)| \mid \chi \in \irr G \} \;;$$

\noindent Huppert's \emph{$\rho$-$\sigma$ conjecture} predicts that $|\rho(G)| \leq 3 \sigma(G)$  holds for every finite group $G$,
and that $|\rho(G)| \leq 2 \sigma(G)$ if $G$ is solvable.
It is worth noting that the bounds are in some sense  best possible, as they are attained for the groups $A_5$ and $S_4$, respectively.

During the last four decades, several contributions have been given toward the proof of this conjecture.
For solvable groups, the conjecture was proved true by D. Gluck (\cite{G}) for $\sigma(G) \leq 2$, and also in the case that all degrees in $\cd G$ are square-free numbers. The best bound known till now was obtained by O. Manz and T.R. Wolf; they proved in~\cite{MW0} that, if $G$ is solvable, then  $|\rho(G)| \leq 3 \sigma(G) +2$.

As for the non-solvable case, the $\rho$-$\sigma$ conjecture was proved true for all finite non-abelian simple groups by D.L. Alvis and M. Barry (\cite{AB}), whereas the general but weaker bound $|\rho(G)| \leq 7 \sigma(G)$ was obtained by C. Casolo and the second author in~\cite{CD}.

One might wonder whether the factor $3$ is  the right one for non-solvable groups, or one should instead  keep the
factor $2$ and  add a suitable constant for getting a tighter bound. 
In a recent paper (\cite{HTV}), H. Tong-Viet studies the so-called \emph{strengthened}  $\rho$-$\sigma$ conjecture
proposed by Manz and Wolf in~\cite{MW0}, that is $|\rho(G)| \leq 2\sigma(G) + 1$ for every finite group~$G$.
The strengthened $\rho$-$\sigma$ conjecture is  verified in~\cite{HTV} for all finite almost-simple groups  and
also for the groups $G$ such that $\sigma(G) \leq 2$.

In this paper we start by considering finite groups with trivial Fitting subgroup and, for these groups, we establish the strengthened $\rho$-$\sigma$ conjecture whenever $\sigma(G) \leq 5$. But we remark that the strengthened $\rho$-$\sigma$ conjecture is false in general: in fact Example~1.1 of \cite{ADPS}, that is recalled below as Example~\ref{ex},  provides a sequence of finite groups $G$ (with trivial Fitting subgroup) such that the ratio $|\rho(G)|/\sigma(G)$ tends to $3$. 

However, the first main result of this paper also establishes the right bound in this setting for the case \(\sigma(G)\geq 6\).

\begin{ThmA}
Let $G$ be a finite group with trivial Fitting subgroup. Then the following conclusions hold.
\begin{enumeratei}
\item If $\sigma(G) \leq 5$, then $|\rho(G)| \leq 2 \sigma(G) +1$.
\item If $\sigma(G) \geq 6$, then $|\rho(G)| \leq 3 \sigma(G) -4$.
\end{enumeratei}
\end{ThmA}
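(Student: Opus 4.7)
\smallskip

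\noindent\textbf{Proof plan.}
Since $F(G)=1$, the generalised Fitting subgroup of $G$ coincides with the layer $N:=E(G)$, a direct product of non-abelian simple groups on which $G$ acts faithfully.  Partition the components of $N$ into $G$-orbits, writing $N=N_1\times\cdots\times N_k$ with $N_j\cong T_j^{n_j}$ for a non-abelian simple group $T_j$, and set $G_j:=G/C_G(N_j)$.  Then $N_j\nor G_j\leq\aut{N_j}$, and $G$ embeds as a subdirect product in $G_1\times\cdots\times G_k$.  The plan is to reduce the bound for $G$ to sharp bounds for the almost-simple-like groups $G_j$ and then recombine, carefully tracking how primes are shared across factors.

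By Clifford theory relative to $N$, every $\chi\in\irr{G}$ lies above a character of $N$ of the form $\theta_1\otimes\cdots\otimes\theta_k$, so $\pi(\chi(1))\subseteq\bigcup_j\pi(\theta_j(1))\cup\pi(|G/N|)$; since $G/N$ embeds into $\prod_j\out{N_j}$, the ``outer'' primes already belong to $\bigcup_j\rho(G_j)$, yielding $\rho(G)=\bigcup_j\rho(G_j)$.  Dually, tensor products of characters of the $G_j$'s lift to irreducible constituents of $G$ of controlled degree, giving $\sigma(G)\geq\sum_j\sigma(G_j)$ whenever the prime sets can be chosen disjoint across $j$.  The problem thereby becomes: given almost-simple-like $G_j$, estimate $|\bigcup_j\rho(G_j)|$ in terms of the $\sigma(G_j)$'s, accounting for prime overlaps.

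The key almost-simple input is Alvis--Barry \cite{AB}, $|\rho(T)|\leq 3\sigma(T)$ for every non-abelian simple $T$ (the extremal ratio $3$ being attained only by $T\cong A_5$), together with Tong--Viet \cite{HTV}, $|\rho|\leq 2\sigma+1$ for almost-simple groups.  One refines Alvis--Barry to $|\rho(T)|\leq 3\sigma(T)-c$ with $c\geq 2$ for every $T\not\cong A_5$, via a direct scan of simple groups with small $\sigma(T)$ (alternating groups, small $\PSL{q}$, Suzuki groups, sporadics) and a uniform argument for groups of Lie type with $\sigma(T)$ large.  Propagating through Clifford theory (using that the primes dividing $|\out{T_j}|$ already lie in $\pi(T_j)$) yields $|\rho(G_j)|\leq 3\sigma(G_j)-c_j$ with $c_j$ substantial unless $T_j\cong A_5$.

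Finally, assembly proves both parts.  For part (ii) ($\sigma(G)\geq 6$), summing gives $|\rho(G)|\leq 3\sigma(G)-\sum_j c_j$; moreover, the primes $\{2,3,5\}$ of $A_5$ can be counted only once in $\bigcup_j\rho(G_j)$, so at most one $A_5$-orbit contributes genuinely new primes while every additional $A_5$-orbit augments $\sigma(G)$ by $1$ without adding to $|\rho(G)|$, producing a total deficit of at least $4$ and hence $|\rho(G)|\leq 3\sigma(G)-4$.  For part (i) ($\sigma(G)\leq 5$), each orbit satisfies $\sigma(G_j)\geq 1$ so $k\leq 5$, and a finite case analysis (using the refined simple-group bounds and Tong--Viet) establishes $|\rho(G)|\leq 2\sigma(G)+1$.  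The main obstacle I anticipate is the sharp refinement $|\rho(T)|\leq 3\sigma(T)-c$ for simple groups with $\sigma(T)=2$---a regime containing many simple groups close to the Alvis--Barry bound---together with the combinatorial bookkeeping when several such simple types appear in different orbits of $G$.
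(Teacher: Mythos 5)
Your reduction to the groups $G_j=G/\cent G{N_j}$ founders at the assembly step, because the two inequalities you combine there are both false. First, $\sigma(G)\geq\sum_j\sigma(G_j)$ fails badly: the primes $2$ and $3$ (and often others) divide character degrees of essentially every factor, so the tensor product of degree-maximizing characters of the $N_j$ has far fewer than $\sum_j\sigma(G_j)$ prime divisors. The paper's own Example~1.1 is the cleanest illustration: there $\sigma(G_\Pi)=n+2$ while each factor has $\sigma=3$, so $\sum_j\sigma(G_j)=3n$. Since your deficit $\sum_j c_j$ is measured against $3\sum_j\sigma(G_j)$ rather than $3\sigma(G)$, the inequality $|\rho(G)|\leq 3\sigma(G)-\sum_j c_j$ does not follow. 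Second, the refinement $|\rho(T)|\leq 3\sigma(T)-c$ with $c\geq 2$ for every simple $T\not\cong A_5$ is false: $\PSL{8}$ has $\rho=\{2,3,7\}$ and $\sigma=1$, so the Alvis--Barry ratio $3$ is attained by infinitely many groups $\PSL{2^f}$, not only by $A_5$. Finally, you never address the orbits with $n_j>1$: there $G_j$ is not almost simple, and the primes contributed by the permutation action of $G$ on the $n_j$ components (e.g.\ large primes in a symmetric group on top) must be captured inside character degrees; this requires genuine work on set-stabilizers in permutation groups and is not a routine Clifford-theory step.

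The paper's route is structured precisely to avoid these traps. It first proves (Theorem~\ref{key} and Corollary~\ref{key2}) that a group with trivial Fitting subgroup and \emph{no simple characteristic subgroup} satisfies $|\pi(G)|\leq 2\sigma(G)$, using Proposition~\ref{permutation} on set-stabilizers together with the invariant $\m G$ to control the permutation part, and Proposition~\ref{HTV} to cover $\pi(S)$ by two characters per component. A minimal counterexample must therefore contain a simple normal subgroup $S$; setting $C=\cent GS$, the sharing of primes between $S$ and $C$ is then tracked explicitly through the inequalities $|\pi(G)|\leq|\pi(C)|+\sum_i|\pi_i-\pi(C)|$ and $\sigma(G)\geq\sigma(C)+\max_i|\pi_i-\pi(C)|$, followed by a case analysis on the number of simple normal subgroups. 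If you want to salvage your plan, you would need to replace the per-factor summation by exactly this kind of relative counting (new primes modulo those of the centralizer), which is where the real content of the proof lies.
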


The groups in Example~\ref{ex} show that the above bounds are sharp.  

\medskip
In the second part of the paper, we focus on solvable groups and we obtain the following improvement of Manz and Wolf's theorem.

\begin{ThmB}
 Let $G$ be a finite group. If $G$ is solvable, then $|\rho(G)| \leq 3 \sigma(G)$. 
\end{ThmB}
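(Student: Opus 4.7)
The plan is to refine the argument of Manz and Wolf in \cite{MW0}, which establishes $|\rho(G)| \leq 3\sigma(G) + 2$, in order to save the additive constant. I would proceed by induction on $|G|$, taking $G$ to be a counterexample of minimal order. Standard reductions allow one to assume that every proper quotient $G/N$ satisfies the bound, and that $G$ has a unique minimal normal subgroup which, by solvability, is an elementary abelian $p$-group for some prime $p$; moreover, one may assume $\sigma(G)$ is reasonably large, since small cases ($\sigma(G) \leq 2$) are covered by Gluck \cite{G}.

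The principal structural tool is the Fitting subgroup $F = \fit G$: for solvable $G$ we have $\cent G F \leq F$, so $G/F$ acts faithfully on $F/\frat F$. Write $\rho(G) = \rho_F \cup \rho_{G/F}$, where $\rho_F \subseteq \pi(F)$ and $\rho_{G/F} \subseteq \pi(G/F)$. For $p \in \rho_F$, either a Sylow $p$-subgroup of $F$ is nonabelian, and It\^o's theorem on characters of nilpotent normal subgroups produces an irreducible character of $G$ with degree divisible by $p$, or it is abelian and Clifford theory together with the action of $G/F$ on $\irr F$ produces such a character via a nontrivial orbit. For $q \in \rho_{G/F}$, Wolf-type orbit theorems applied to the coprime action of the Sylow $q$-subgroup of $G/F$ on $F/\frat F$ (or on $\irr{F/\frat F}$) yield a character of $G$ whose degree is divisible by $q$. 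To convert these into $|\rho(G)| \leq 3\sigma(G)$, I would partition $\rho(G)$ into at most three subsets according to the character-degree mechanism that witnesses membership, and argue that each contributes at most $\sigma(G)$ primes. The broader class mentioned in the introduction, which contains all solvable groups, should emerge naturally, since the arguments use only the containment $\cent G F \leq F$ together with certain structural features of characters of $F$ and its quotients.

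The main obstacle will be the constant-saving itself. Manz and Wolf's $+2$ reflects configurations where two primes appear without an efficient association to a character degree of size $\sigma(G)$: typically, primes whose only ``witness'' lives in a character whose degree already accounts for other primes, forcing double-counting. Eliminating this slack will likely demand a sharper orbit analysis on $F/\frat F$ that pairs each prime of $\pi(G/F)$ with a character whose degree is divisible by it \emph{alongside} a controlled set of other primes, ensuring that the three-fold partition is genuinely tight. The delicate cases will be those where $F$ is abelian and a small number of primes in $\pi(G/F)$ act with nearly coincident orbit structures; here a more refined use of Gluck--Wolf type inequalities, and possibly the exploitation of the minimality of $G$ to impose additional structure on the chief factors, should close the gap.
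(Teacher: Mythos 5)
Your outline correctly identifies the architecture that the paper also uses: pass to $G/\frat G$, split $\rho(G)$ into the set $\nu$ of primes with a normal non-abelian Sylow subgroup and the set $\pi(G/\fit G)$, and cover the latter by character degrees arising from orbits of the faithful, completely reducible action of a complement $H$ on $\widehat{\fit G}$. But the proposal stops exactly where the theorem begins: you name ``eliminating the slack'' as the main obstacle and then only gesture at ``a sharper orbit analysis'' and ``refined Gluck--Wolf type inequalities'' without supplying the idea that actually removes the $+2$. As written, the three-fold partition by ``witnessing mechanism'' gives no reason why each block is contained in the prime set of a \emph{single} irreducible character degree, which is what bounding each block by $\sigma(G)$ requires.

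The paper's removal of the constant rests on two concrete ingredients missing from your sketch. First, a sharpened permutation-group statement (Proposition~\ref{op2}): for any solvable permutation group $H$ on $\Omega$ there is a non-empty $\Delta\subseteq\Omega$ with $\pi(H)\subseteq\pi(H:H_{\Delta})\cup\{p\}$ for a \emph{single} prime $p\in\{2,3\}$ (and no exception when $|H|$ is odd). Fed through the imprimitivity decomposition, the type-1/non-type-1 dichotomy for the primitive constituents, and Yang's regular-orbit theorems, this produces two characters $\beta_1,\beta_2$ with $\pi(G/\fit G)\subseteq\pi(\beta_1)\cup\pi(\beta_2)\cup\{p\}$, so the slack is one localized prime before any counting starts. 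Second, an absorption argument: if $\pi(\beta_1)\cap\pi(\beta_2)\neq\emptyset$, or if $\nu$ meets $\pi(\beta_1)\cup\pi(\beta_2)\cup\{p\}$, or if some $\beta_3\in\irr{G|\varphi}$ (with $\varphi\in\irr N$, $N$ the normal Hall $\nu$-subgroup, $\pi(\varphi)=\nu$) has degree divisible by a prime outside $\nu$, then $p$ is absorbed into $|\pi(\beta_1)|+|\pi(\beta_2)|+|\pi(\beta_3)|$; the one remaining configuration is killed by Gallagher's theorem, which forces $G/N$ abelian. Without both ingredients your argument yields at best $3\sigma(G)+1$. Finally, your closing remark that the extension to groups with $\cent G{\E G}$ solvable ``should emerge naturally'' from $\cent G{\fit G}\leq\fit G$ is not accurate: that extension (Theorem~C) requires the entirely separate machinery for groups with trivial Fitting subgroup, including Lusztig-theoretic character constructions for the components, and cannot be read off from the solvable argument.
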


Finally, we extend the bound given in Theorem~B to a wider class of groups.
We recall that the generalized Fitting subgroup $\fitg G$ of the finite group \(G\) is the central product of the Fitting subgroup $\fit G$ and the \emph{layer} subgroup ${\bf E}(G)$, which is the group generated by all the \emph{components} of $G$ (see Section~3). It is well known that $\cent G{\fitg G}\leq \fitg G$. Hence, $\cent G{\fitg G} = \zent{\fit G}$ and, when
$\fit G = 1$, $\cent G{\fitg G} = \cent G{{\bf E}(G)} = 1$.
We consider, as a generalization of this setting, the case when the centralizer $\cent G{{\bf E}(G)}$ of the 
layer subgroup of $G$ is \emph{solvable}. The corresponding family of groups hence contains both the family of the groups with trivial Fitting subgroup and the family of the solvable groups. 
\begin{ThmC}
  Let $G$ be a finite group such that $\cent G{\E G}$ is solvable. Then
  $$|\rho(G)| \leq 3\sigma(G) \; .$$ 
\end{ThmC}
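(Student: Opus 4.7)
The plan is to reduce Theorem~C to Theorems~A and~B by analyzing the normal subgroup $C = \cent{G}{\E{G}}$. Write $L = \E{G}$; by hypothesis $C$ is a solvable normal subgroup of $G$. The first step is to prove $\fit{G/C} = 1$, so that Theorem~A applies to $G/C$. Suppose $K/C$ is a minimal normal abelian subgroup of $G/C$. Then $K \cap L \trianglelefteq L$, and its image in the direct product $L/\zent{L}$ of nonabelian simple groups is a nilpotent normal subgroup, hence trivial; so $K \cap L \leq \zent{L}$ and consequently $[K,L] \leq K \cap L \leq \zent{L}$, meaning every $k \in K$ induces on $L$ an automorphism $\phi$ with $\phi(\ell) \in \ell \zent{L}$ for all $\ell \in L$. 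Since $L$ is a central product of quasisimple (hence perfect) components, the map $\ell \mapsto \phi(\ell)\ell^{-1}$ is a homomorphism $L \to \zent{L}$ from a perfect group to an abelian group, and so is trivial; thus $\phi = \mathrm{id}$ and $K \leq C$, contradicting $K/C \neq 1$.

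Next I establish the covering inclusion $\rho(G) \subseteq \rho(C) \cup \rho(G/C)$. Given $\chi \in \irr{G}$ and $\theta \in \irr{C}$ under $\chi$, Clifford theory yields $\chi(1)/\theta(1) \mid [G:C]$, so $\pi(\chi) \subseteq \rho(C) \cup \pi(G/C)$. Since $\fit{G/C}=1$, the Ito--Michler theorem forces $\pi(G/C) = \rho(G/C)$: a prime $p$ dividing $|G/C|$ but no character degree would produce a normal abelian Sylow $p$-subgroup, impossible by $\fit{G/C} = 1$. Applying Theorem~B to the solvable group $C$ gives $|\rho(C)| \leq 3\sigma(C) \leq 3\sigma(G)$, and Theorem~A applied to $G/C$ gives $|\rho(G/C)| \leq 3\sigma(G/C) \leq 3\sigma(G)$ (both $2\sigma + 1$ and $3\sigma - 4$ are majorized by $3\sigma$).

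The hardest step is the final combining: the additive estimate $|\rho(C)|+|\rho(G/C)| \leq 3\sigma(C)+3\sigma(G/C)$ can exceed $3\sigma(G)$, so one must exploit the overlap $\rho(C) \cap \rho(G/C)$ by producing an irreducible character of $G$ whose degree accumulates primes from both $\rho(C)$ and $\rho(G/C)$. In the favourable case where one can pick $\theta \in \irr{C}$ with $|\pi(\theta)| = \sigma(C)$ that is $G$-invariant and extends to some $\tilde\theta \in \irr{G}$, Gallagher's theorem supplies $\chi = \tilde\theta \cdot \beta \in \irr{G}$ for every $\beta \in \irr{G/C}$, with $\chi(1) = \theta(1)\beta(1)$; choosing $\beta$ with $|\pi(\beta)| = \sigma(G/C)$ yields $\sigma(G) \geq |\pi(\theta) \cup \pi(\beta)| \geq \sigma(C) + \sigma(G/C) - |\rho(C) \cap \rho(G/C)|$, and inclusion--exclusion closes the estimate. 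The main obstacle is the general case, when $\theta$ is neither $G$-invariant nor extendible; handling it will presumably require an induction on $|G|$ and a case analysis on the structure of $I_G(\theta)/C$ and of the composition factors of $L$, making essential use of the sharper bound $|\rho(G/C)| \leq 2\sigma(G/C)+1$ supplied by Theorem~A when $\sigma(G/C) \leq 5$, which provides just enough slack to absorb the overlap in the remaining low-rank configurations.
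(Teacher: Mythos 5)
Your reduction is sound as far as it goes: the argument that $\fit{G/C}=1$ for $C=\cent G{\E G}$ is correct (the paper notes the same fact via $C$ being the solvable radical), and the covering $\rho(G)\subseteq\rho(C)\cup\pi(G/C)$ is standard Clifford theory. The problem is the final combining step, which is where all the difficulty lies, and it is both incomplete and, in the part you do sketch, quantitatively wrong. Even in your ``favourable case'' the inclusion--exclusion does not close: writing $I=|\rho(C)\cap\rho(G/C)|$, your estimate gives $\sigma(G)\geq\sigma(C)+\sigma(G/C)-I$, hence $3\sigma(G)\geq 3\sigma(C)+3\sigma(G/C)-3I\geq|\rho(C)|+|\rho(G/C)|-3I=|\rho(G)|-2I$, i.e.\ only $|\rho(G)|\leq 3\sigma(G)+2I$. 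The overlap is subtracted three times on the $\sigma$ side but only once on the $\rho$ side, so the terminal bounds $|\rho|\leq 3\sigma$ for the two pieces are simply too weak to be added. The general (non-invariant, non-extendible) case is then explicitly left open.

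The paper's proof works precisely because it never uses the numerical conclusions of Theorems~A and~B; instead it uses the stronger \emph{covering} statements behind them. After reducing to $\zent{\E G}=1$ (so that $\E G\,R=\E G\times R$, where $R=C$), Theorem~\ref{keyHTV} produces three characters $\alpha_1,\alpha_2,\alpha_3\in\irr{\E G}$ whose degree-primes together with the primes of $|G:I_G(\alpha_i)|$ cover $\pi_{\geq m}(G/R)$ (with the $\pr_m$ defect controlled by item (b)), and Theorem~\ref{KRS} produces three characters $\beta_1,\beta_2,\beta_3\in\irr R$ with $|\rho(R)|\leq\sum_j|\pi(\beta_j)|$. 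Choosing the ``largest'' $\alpha$ and $\beta$ in the appropriate relative sense and taking $\chi\in\irr{G\mid\alpha\times\beta}$ yields a single irreducible character whose degree is divisible by at least $|\rho(G)|/3$ primes. To repair your argument you would need analogous three-character covering versions of your two ingredients, which is exactly the content of Sections~4 and~5 of the paper; without them the additive strategy cannot succeed.
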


\medskip
For the convenience of the reader, and also because the point of view of the present context is not the same as that of \cite{ADPS}, we close this introductory section by recalling Example~1.1 of \cite{ADPS} and some related comments.
\begin{example}
\label{ex}
Let \(\Pi=\{p_1^{f_1},...,p_n^{f_n}\}\) be a set of prime powers where every prime \(p_i\) is larger than \(5\). Assume that, for every \(i\in\{1,...,n\}\), we have \(|\pi(p_i^{f_i}-1)\setminus\{2,3\}|=|\pi(p_i^{f_i}+1)\setminus\{2,3\}|=1\), and assume further that, for distinct \(r\) and \(s\) in \(\{1,...,n\}\), the intersection of the sets \(\{p_r\}\cup\pi(p_r^{2f_r}-1)\) and \(\{p_s\}\cup\pi(p_s^{2f_s}-1)\) is \(\{2,3\}\). 
Now, setting \(G_{\Pi}=\PSL{p_1^{f_1}}\times\cdots\times\PSL{p_n^{f_n}}\) (note that \(\fit{G_{\Pi}}=1\)) and taking into account that, for \(p>5\) and \(f\geq 1\), we have
$$ \cd{\PSL{p^{f}}} = \{1, p^{f} -1, p^{f} , p^{f}  +1, \frac{1}{2}(p^{f}  + \epsilon)\} \text{ where }
  \epsilon = (-1)^{\frac{p^{f} -1}{2}}$$ (see for instance \cite[Theorem~3.2]{W}), it is easy to see that \(|\rho(G_{\Pi})|=3n+2\) and \(\sigma(G_{\Pi})=n+2\), thus \(|\rho(G_{\Pi})|=3\sigma(G_{\Pi})-4\). As a consequence, if \(n\geq 4\), the strengthened \(\rho\)-\(\sigma\) conjecture as formulated by Manz and Wolf does not hold for the group \(G_{\Pi}\).

Note that \(\Pi=\{29, 67, 157, 227\}\) is a set of four prime powers (in fact, of four primes) satisfying the above conditions. This provides a counterexample to the strengthened \(\rho\)-\(\sigma\) conjecture in which the size of \(\rho(G_{\Pi})\) is \(14\), whereas \(\sigma(G_{\Pi})\) is \(6\).
\end{example}

Let $\Pi_n$ be one particular set of given  size $n$ as in Example~\ref{ex}.
Assuming that such a set exists for arbitrarily large \(n \in \mathbb{N} \), we see that the ratio \(|\rho(G_{\Pi_n})|/\sigma(G_{\Pi_n})\) converges to \(3\) as \(n\) tends to infinity (actually, machine computation with prime numbers up to \(10^6\) enables us to construct a group \(G_{\Pi}\) as in Example~\ref{ex} for which \(|\rho(G_{\Pi})|/\sigma(G_{\Pi})>2.999\)); this leads us to conjecture that, for every positive real number \(\epsilon\), there exists a group \(G\) (with trivial Fitting subgroup and) with \(|\rho(G)|/\sigma(G)>3-\epsilon\). In fact, in \cite{EG} the authors estimate the asymptotic density of one particular set of primes, whose infinitude yields the existence of a set \(\Pi_n\) as above for every \(n\in\N\). This set  is actually infinite, if a generalized form of the Hardy-Littlewood conjecture is assumed (\cite[Theorem 2.3]{EG}) 

 Finally, we mention that every group considered throughout the following discussion is tacitly assumed to be a finite group.
 \section{Preliminaries}

If a group $G$ acts on a set $\Omega$, and $\Delta \subseteq \Omega$, we denote by $G_{\Delta}=\{g\in G \mid\Delta g=\Delta\}$ the
stabilizer of $\Delta$ in $G$. Also, if $n$, $m$ are non-negative integers, we denote by $\pi_{\geq m}(n)$ the set of all prime divisors of $n$ which are greater than or equal
to $m$, whereas \(\pr_m\) will denote the set of all primes in \(\N\) that are smaller than or equal to \(m\). We write $\pi(G)$  and $\pi_{\geq m}(G)$ for  $\pi(|G|)$ and  $\pi_{\geq m}(|G|)$, respectively. Similarly, if $H$ is a subgroup of the group $G$, we use the notation $\pi(G:H)$ and $\pi_{\geq m}(G:H)$.
Finally,
for $\chi \in \irr{G}$, as already mentioned we will write  $\pi(\chi)$ in place of  $\pi(\chi(1))$.  

For a group $G$, we define $\m G$ to be the largest integer $ m\geq 5$ 
such that $G$ has a section isomorphic to the alternating group $A_m$ (that is, such that there are subgroups
$K \unlhd H \leq G$ with $H/K \cong A_m$), and we set $\m G = 0 $ if there is no such section in $G$; note that \(\pr_{\m G}\) is contained in \(\pi(G)\), since it is in fact contained in \(\pi(A_m)\). We prove next a straightforward property of \(\m G\).

\begin{lemma}\label{lm}
  Let $G$ be a group, and $N$ a normal subgroup of $G$. Then, setting \(m=\m G\), \(m_1=\m N\) and \(m_2=\m{G/N}\), we have $m = \max \{m_1,m_2\}$. 
\end{lemma}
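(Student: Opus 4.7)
The plan is a two-sided inequality. The easy direction is $\m G \geq \max\{\m N,\m{G/N}\}$: any section of $N$ is trivially a section of $G$, and by the correspondence theorem any section $H/K$ of $G/N$ lifts to a section of $G$ (with the same quotient). So if either of $\m N$ or $\m{G/N}$ equals some $m \geq 5$, then $A_m$ appears as a section of $G$ as well, and if both are $0$ there is nothing to check on this side. Hence $\m G\geq\max\{\m N,\m{G/N}\}$, and in particular when $\m G = 0$ the other two numbers vanish too.

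For the reverse inequality I would assume $m=\m G\geq 5$ and fix subgroups $K\nor H\leq G$ with $H/K\cong A_m$, then exploit the \emph{simplicity} of $A_m$. The key remark is that $H\cap N\nor H$, and therefore the image $K(H\cap N)/K$ is a normal subgroup of $H/K\cong A_m$. Since $A_m$ is simple for $m\geq 5$, this image is either trivial or the whole of $H/K$, and these two alternatives will respectively exhibit $A_m$ as a section of $G/N$ or of $N$.

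In the first case $H\cap N\leq K$. A short computation with the Dedekind identity (using $K\leq H$) gives $H\cap KN = K(H\cap N)=K$, so via the correspondence in $HN/N$ one obtains
\[
\frac{HN/N}{KN/N}\;\cong\; HN/KN \;\cong\; H/(H\cap KN)\;=\;H/K \;\cong\; A_m,
\]
whence $A_m$ is a section of $G/N$. In the second case $H=K(H\cap N)$, and since $K\nor H$ implies $K\cap N=K\cap(H\cap N)\nor H\cap N$, the second isomorphism theorem yields
\[
(H\cap N)/(K\cap N)\;\cong\; K(H\cap N)/K \;=\; H/K \;\cong\; A_m,
\]
so $A_m$ is a section of $N$. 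Either way $\m G\leq\max\{\m N,\m{G/N}\}$, completing the proof.

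The only real obstacle is bookkeeping: making sure to check $K\cap N\nor H\cap N$ in case two and the identity $H\cap KN=K$ in case one. Everything else is formal, once one notices that the simplicity of $A_m$ forces the dichotomy on $K(H\cap N)/K$.
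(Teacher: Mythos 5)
Your proof is correct and follows essentially the same route as the paper: the paper also establishes the easy inequality $\m G\geq\max\{m_1,m_2\}$ and then, for $m\geq 5$, observes that the image section $\overline H/\overline K$ in $G/N$ is either isomorphic to $A_m$ or trivial (the same dichotomy you derive from the normality of $K(H\cap N)/K$ in $H/K\cong A_m$ and simplicity), concluding $m=m_2$ or $m=m_1$ respectively. You merely spell out the isomorphism-theorem bookkeeping that the paper leaves implicit.
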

\begin{proof} 
  If \(m=0\), then clearly \(m_1\) and \(m_2\) are both \(0\) as well, 
and it is also clear that \(m\geq\max \{m_1,m_2\}\) holds for positive values of \(m\) as well. Assume now $m \geq 5$, consider a section $H/K$ of $G$ such that $H/K \cong A_m$, and set $\b{G} = G/N$. Then the section $\b H / \b K$ of $\b G$ is isomorphic either to $A_m$ or to the trivial group, and we have $m = m_2$ in the former case, whereas \(m=m_1\) in the latter case. 
\end{proof}

The following proposition concerning permutation groups turns out to be very useful in our arguments.

\begin{proposition}\label{permutation}
Let $G$ be a permutation group on the finite set $\Omega$ and $m=\m G$. Then there exist \(\Gamma,\;\Delta\subseteq\Omega\) such that $\Gamma\cap\Delta=\emptyset$ and
\begin{enumeratei}
\item \(\pi_{\geq m}(G)=\pi_{\geq m}(G:G_{\Gamma}\cap G_{\Delta})\),
\item  \(|\pr_m|\leq 2|\pi(G:G_{\Gamma}\cap G_{\Delta})\cap\pr_m|.\)
\end{enumeratei}
\end{proposition}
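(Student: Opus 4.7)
The plan is to construct $\Gamma$ and $\Delta$ by a direct selection of representative points from the $G$-orbits on $\Omega$, and then verify the required divisibility properties for the index $[G:G_\Gamma\cap G_\Delta]$.

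First, for each $G$-orbit $\Omega_i$ on $\Omega$ with $|\Omega_i|\ge 2$, I would select two distinct points $\alpha_i,\beta_i\in\Omega_i$ (omitting orbits of size $1$, since fixed points contribute nothing). Setting $\Gamma=\{\alpha_i\}_i$ and $\Delta=\{\beta_i\}_i$ gives $\Gamma\cap\Delta=\emptyset$ by construction. The crucial observation is that, since the $\alpha_i$ lie in pairwise distinct $G$-orbits, no element of $G$ can map $\alpha_i$ to $\alpha_j$ for $i\neq j$; hence the setwise stabilizer $G_\Gamma$ coincides with the pointwise stabilizer $\bigcap_i G_{\alpha_i}$, and analogously for $G_\Delta$. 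Consequently $G_\Gamma\cap G_\Delta=\bigcap_i(G_{\alpha_i}\cap G_{\beta_i})$, i.e.\ the pointwise stabilizer of $\Gamma\cup\Delta$.

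For condition~(i), I would verify that every $p\in\pi_{\ge m}(G)$ divides $[G:G_\Gamma\cap G_\Delta]$. Since $G$ is a permutation group (hence acts faithfully on $\Omega$), no non-trivial Sylow $p$-subgroup can fix all of $\Omega$ pointwise. The key step would be to show that, for each such $p$, one can choose an orbit $\Omega_i$ and points $\alpha_i,\beta_i\in\Omega_i$ so that no Sylow $p$-subgroup of $G$ is contained in $G_{\alpha_i}\cap G_{\beta_i}$; this gives $p\mid[G:G_{\alpha_i}\cap G_{\beta_i}]$, and by the monotonicity $[G:G_\Gamma\cap G_\Delta]=[G:\bigcap_i(G_{\alpha_i}\cap G_{\beta_i})]$ is a multiple of each $[G:G_{\alpha_i}\cap G_{\beta_i}]$, we obtain $p\mid[G:G_\Gamma\cap G_\Delta]$.

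For condition~(ii), the key input is the inclusion $\pr_m\sbs\pi(G)$ recorded in the text (when $m\ge 5$, this comes from the $A_m$-section of $G$; when $m=0$, the condition is vacuous). With the same construction as above, I would argue that at least half of the primes of $\pr_m$ must appear in $\pi([G:G_\Gamma\cap G_\Delta])$: the factor $2$ reflects the flexibility in the bipartite assignment of witnesses to either $\Gamma$ or $\Delta$, and the worst case is that some small primes may fail to be witnessed because a full Sylow subgroup happens to lie in every pairwise stabilizer along the chosen orbits. If needed, I would enlarge $\Gamma$ and $\Delta$ by adding further witness points from existing orbits to absorb more small primes, using the disjointness $\Gamma\cap\Delta=\emptyset$ to keep setwise stabilizers under control.

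The main obstacle I anticipate is the verification of the key step in~(i): ensuring, for each prime $p\in\pi_{\ge m}(G)$, the existence of a suitable orbit $\Omega_i$ and pair $(\alpha_i,\beta_i)$ whose pointwise stabilizer omits some Sylow $p$-subgroup of $G$. When a Sylow $p$-subgroup has few fixed points on $\Omega$, such a choice is easy; the difficulty arises precisely when $p$ divides $|G_\alpha|$ for every $\alpha\in\Omega$, forcing one to work with stabilizers of pairs rather than points. This is where the hypothesis $p\ge m=\m G$ must be exploited: the absence of an $A_n$-section with $n>m$ should rule out the pathological configurations in which every pairwise pointwise stabilizer retains a full Sylow $p$-subgroup, allowing one to conclude.
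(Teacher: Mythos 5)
Your proposal has a genuine gap: the construction you describe cannot prove the statement, and the parts you flag as ``key steps'' are exactly where all the mathematical content lies. Taking $\Gamma$ and $\Delta$ to consist of one point per orbit (so that $G_{\Gamma}\cap G_{\Delta}$ is the pointwise stabilizer of at most two points per orbit) is demonstrably too weak for claim (b). Consider $G=S_n$ acting naturally on $n$ points with $n$ large, say $n=101$: here $m=\m G=n$, $|\pr_m|=\pi(101)=26$, but the stabilizer of any two points is $S_{n-2}$ of index $n(n-1)=101\cdot 100$, which has only three prime divisors; so $2\cdot 3 < 26$ and (b) fails badly. To handle such cases one must take $\Gamma$ and $\Delta$ to be subsets of carefully chosen \emph{sizes} inside an orbit (for $S_n$ one exploits Kummer-type divisibility of binomial coefficients $\binom{n}{k}$ by varying $k$), and a similar issue arises for (a): showing that each $p\in\pi_{\geq m}(G)$ fails to stabilize a suitable pair of subsets is precisely the hard point, which you explicitly defer. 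Your closing remark that you ``would enlarge $\Gamma$ and $\Delta$ if needed'' is where the entire proof would have to happen.

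For comparison, the paper does not reprove any of this: the transitive case is exactly Proposition~1 of \cite{CD}, which is quoted as a black box, and the paper's actual argument is only the reduction of the intransitive case to the transitive one. Namely, one applies the cited result to each orbit $\Omega_i$ modulo the kernel $K_i$ of the action on $\Omega_i$, obtaining disjoint $\Gamma_i,\Delta_i\subseteq\Omega_i$, and then takes $\Gamma=\bigcup_i\Gamma_i$, $\Delta=\bigcup_i\Delta_i$; claim (a) follows from $\bigcap_i K_i=1$ together with $G_\Gamma=\bigcap_i G_{\Gamma_i}$ and Lemma~\ref{lm} (which gives $m=\max_i m_i$), and claim (b) follows by locating one orbit with $m_i=m$ and using divisibility of indices. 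If you want a self-contained argument you would need to supply a proof of the transitive case, which is a substantially harder combinatorial statement than your two-points-per-orbit selection acknowledges.
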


\begin{proof}
Observe that, under the additional assumption that the action of \(G\) on \(\Omega\) is transitive, our statement is precisely Proposition~1 of \cite{CD}. 

So, let \(\Omega_1,\ldots,\Omega_t\) be the orbits of the action of \(G\) on \(\Omega\) and, for \(i\in\{1,\ldots t\}\), denote by \(K_i\) the kernel of the action of \(G\) on \(\Omega_i\); also, set \(m_i=\m{G/K_i}\). An application of \cite[Proposition~1]{CD} to the (transitive) action of \(G/K_i\) on the set \(\Omega_i\) yields that there exist disjoint subsets \(\Gamma_i,\;\Delta_i\) of \(\Omega_i\) such that 
\begin{enumeratei}
\item[(i)]  \(\pi_{\geq m_i}(G/K_i)=\pi_{\geq m_i}(G:G_{\Gamma_i}\cap G_{\Delta_i})\),
\item[(ii)]  \(|\pr_{m_i}|\leq 2|\pi(G:G_{\Gamma_i}\cap G_{\Delta_i})\cap\pr_{m_i}|,\)
\end{enumeratei}
for \(i\in\{1,\ldots, t\}\).

Next, define \[\Gamma=\bigcup_{i=1}^t\Gamma_i,\quad{\rm{and}}\quad\Delta=\bigcup_{i=1}^t\Delta_i,\]
so that \(\Gamma\) and \(\Delta\) are clearly disjoint subsets of \(\Omega\); it is also immediate to see that \(G_{\Gamma}=\bigcap_{i=1}^t G_{\Gamma_i}\) and \(G_{\Delta}=\bigcap_{i=1}^t G_{\Delta_i}\). Now, taking into account that \(\bigcap_{i=1}^t K_i=1\), we get  \(m={\rm{max}}\{m_1,\ldots,m_t\}\) and \[\pi_{\geq m}(G)\subseteq\bigcup_{i=1}^t \pi_{\geq m}(G/K_i)=\bigcup_{i=1}^t \pi_{\geq m}(G:G_{\Gamma_i}\cap G_{\Delta_i})\subseteq\pi_{\geq m}(G:\bigcap_{i=1}^t(G_{\Gamma_i}\cap G_{\Delta_i}))=\]\[=\pi_{\geq m}(G:(\bigcap_{i=1}^t G_{\Gamma_i})\cap(\bigcap_{i=1}^t G_{\Delta_i}))=\pi_{\geq m}(G:G_{\Gamma}\cap G_{\Delta}),\]
and (a) follows.  As for (b), observe that (we may assume \(m\neq 0\) and) there exists \(i\in\{1,\ldots,m\}\) such that \(G/K_i\) has a section isomorphic to \(A_m\); clearly \(m_i\) is then equal to \(m\), and our claim easily follows by (ii) observing that \(|G:G_{\Gamma_i}\cap G_{\Delta_i}|\) is a divisor of \(|G:G_\Gamma\cap G_\Delta|\).
\end{proof}

Note that, in the previous proposition, we can choose the two subsets \(\Gamma\) and \(\Delta\) of \(\Omega\) to be both non-empty unless \(G\) is the trivial group. In fact, assume that \(\Delta\) is empty; then, \(G\) being non-trivial, \(\Gamma\) is neither empty nor the whole \(\Omega\), and we can replace \(\Delta\) with \(\Omega\setminus\Gamma\).

\smallskip
Next, we sketch the proof of a result concerning almost-simple groups that can be essentially deduced from the proofs of Lemma~2.2 and Theorem~2.3 in \cite{HTV} (we refer the reader to that paper for the full details). 

Denote by $\mathcal{L}(p)$ the class of simple groups of Lie type in characteristic $p$; also, if $q$ is a prime power, denote by $ \ell_{n}(q) $ a primitive prime divisor of $q^n-1$ (that is a prime divisor of $q^n -1$, which does not divide $q^k -1$ for $1 \leq k < n$), if it exists.
By Zsigmondy's theorem (\cite[Theorem 6.2]{MW}), a primitive prime divisor of $q^n -1$ always exists unless
$n = 2$,  or $q=2$ and $n =6$.

\begin{proposition}\label{HTV}
	Let $G$ be an almost-simple group with socle $S$.
	Then there exist two irreducible characters $\chi_1, \chi_2 \in \irr S$ such that the following conclusions hold. 
	\begin{enumeratei}
		\item If $S$ is either a sporadic simple group, $S \not\cong J_1$, or  $S \cong A_m$ for $m> 5$, or \(S\) is the Tits group, then $\pi(S) = \pi(G) =  \pi(\chi_1) \cup \pi(\chi_2)$;
		if $S \cong J_1$, then $\pi(G) - (\pi(\chi_1) \cup \pi(\chi_2) ) = \{ 11 \}$.
		\item If $S \in \mathcal{L}(p)$, then $\pi(S) - (\pi(\chi_1) \cup \pi(\chi_2 )) \subseteq \{ p \}$.
		Moreover, 
		$\pi(G) - \pi(S)$ is contained in both $\pi(|G:I_G(\chi_1)|)$ and  $\pi(|G:I_G(\chi_2)|)$.
	\end{enumeratei} 
\end{proposition}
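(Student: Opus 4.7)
The plan is to follow closely the constructions in \cite[Lemma~2.2 and Theorem~2.3]{HTV}, producing in each case two characters $\chi_1,\chi_2\in\irr{S}$ whose degrees between them cover the primes required by (a) or (b), and then to verify the additional inertia condition needed in~(b). I would split the argument by the isomorphism type of $S$: the alternating and sporadic cases admit direct constructions, and the Lie-type case is the substantial one.

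For $S$ sporadic or the Tits group, I would inspect the ATLAS: in all such groups except $J_1$ one can exhibit two irreducible characters whose degrees are jointly divisible by every prime in $\pi(S)$, with the failure for $J_1$ occurring precisely at~$11$. Since $\pi(\out{S})\sbs\{2,3\}\sbs\pi(S)$ in these cases, one has $\pi(G)=\pi(S)$ and~(a) follows. For $S\cong A_m$ with $m\geq 6$, the construction in \cite[Lemma~2.2]{HTV} provides two partitions of $m$ whose associated irreducible character degrees are jointly divisible by every prime $\leq m$; since $\out{A_m}$ is a $2$-group (with the case $m=6$ checked directly), $\pi(G)=\pi(S)$ again follows.

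For $S\in\mathcal{L}(p)$ I would take $\chi_1$ and $\chi_2$ to be semisimple characters (in Lusztig's sense) attached to regular semisimple classes $[s_1],[s_2]$ in the dual group, where $s_i$ is chosen to have order divisible by a primitive prime divisor $\ell_{a_i}(q)$ for two cyclotomic indices $a_1,a_2$ depending on the Lie type of $S$ (for instance, the Coxeter number and a neighbouring index). The degree of such a $\chi_i$ is, up to a $p$-power, the index of the torus centralising $s_i$, so the prime coverage $\pi(S)\setminus\{p\}\sbs\pi(\chi_1)\cup\pi(\chi_2)$ follows from Zsigmondy's theorem; the few small groups where Zsigmondy fails I would treat by hand via the ATLAS.

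The main obstacle, and the content beyond what is already present in \cite{HTV}, is the inertia statement in~(b). The key point is that primitive prime divisors detect field automorphisms: a field automorphism of order $r$ sends a Frobenius-conjugacy class over $\F_q$ to one over $\F_{q^{1/r}}$, and so cannot stabilise a class of an element whose order is divisible by $\ell_{a_i}(q)$ when $a_i$ is larger than all field-automorphism indices appearing in $G/S$. Since every prime in $\pi(G)\setminus\pi(S)$ is necessarily of field-automorphism type (diagonal and graph contributions always lie in $\pi(S)$), this forces such primes to divide $|G:I_G(\chi_i)|$ for each $i$. Executing this uniformly across the Lie types, with small-rank exceptions verified by direct computation, yields~(b).
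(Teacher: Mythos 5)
Your proposal follows essentially the same route as the paper's proof, which is itself largely a sketch deferring to Lemma~2.2 and Theorem~2.3 of \cite{HTV}: ATLAS/literature for the sporadic, Tits and alternating cases, Lusztig-series characters attached to semisimple elements of primitive-prime-divisor order (with the two tori chosen as in the paper's Table~1, Zsigmondy exceptions handled by hand), and the field-automorphism argument for the inertia condition in (b). The only detail worth adding is the observation that $\gcd(|s_i|,|\zent{L}|)=1$, which guarantees that the chosen characters are trivial on the centre and hence genuinely lie in $\irr{S}$.
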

\begin{proof}
	If $S$ is either a sporadic simple group, or an alternating group, or the Tits group, then we have $\pi(S) = \pi(G)$ and claim (a) follows from \cite[Theorem~B(i), Theorem~C]{AB} and by \cite{atlas}  (actually, by \cite{BW} there exists  $\chi \in\irr{A_m}$ such that
	$\pi(A_m) = \pi(\chi)$ for $m \geq 15$). We remark that for \(A_5\) claim (a) does not hold, but this group can be treated as a simple group of Lie type (both in characteristic \(2\) and \(5\)) and, as such, it satisfies claim (b).
	
	Let now $S \in \mathcal{L}(p)$ (excluding the Tits group). The case \(S\cong\PSL{2^f}\) (for \(f\geq 2\)) is treated as Case 1 in \cite[proof of Theorem~2.3]{HTV}, so we will henceforth assume that \(S\) is not of this kind. 

Next, assume that at least one among the primitive prime divisors  $\ell_1$, $\ell_2$ indicated in Table \ref{table:1} (which summarizes \cite[Table~1]{HTV} and \cite[Lemma~2.3]{MT}) does not exist; then, it can be seen that \(S\) is isomorphic to a group of the kind $\PSL{q}$,  ${\rm{PSL}}_3{(q)}$, ${\rm{PSU}}_3(q)$ or ${\rm{PSp}}_4(q)$ for some specific values of \(q =p^f\), or $S$ belongs to a finite set of groups (see \cite[List~$\mathcal{C}$, page 3]{HTV}). Also, \(\pi(G)\) turns out to coincide with \(\pi(S)\) in this situation. The four infinite families above are discussed in \cite[Cases 1--3, proof of Lemma~2.2]{HTV}, whereas the remaining finite set of groups can be treated via \cite{atlas}. 
	
	
As regards the cases that are left (for which both \(\ell_1\) and \(\ell_2\) exist) we introduce the following setup. Let $\mathscr{G}$ be a simply connected simple algebraic group defined over a field of order $q$ in characteristic $p$, and let $F$  be a suitable Frobenius map such that  $ S\cong \mathscr{G} ^{F}/\mathbf{Z}(\mathscr{G} ^{F})$. Suppose the pair  ($ \mathscr{G} ^{*}, F^{*} $)   is dual to   $ (\mathscr{G} , F) $. Setting  $L=\mathscr{G} ^{F}$   and    $ L^*=(\mathscr{G}^{*})^{F^{*}}$,  the  irreducible characters of $L$ are  partitioned into rational series $\mathscr{E}(L ,(s) )$ which are indexed by $(L^*)$-conjugacy
	classes $(s)$ of semisimple elements $s \in  L^*$, by Lusztig theory. Furthermore, if \({\rm{gcd}}(|s|, |{\bf Z}(L)|) = 1\), then every  $\chi \in \mathscr{E}(L ,(s) )$ is trivial at ${\bf Z}(L)$, thus $\chi\in \irr{S}$. Observe that $\chi(1)$  is divisible by $|L^*: \cent{L^*}s|_{p'}$. 
	Now, let $s_i\in L^*$  be a semisimple element of order $\ell_i$ for  $i\in \{1,2\}$: then \(\cent{L^*}{s_i}=T_i\) (see Table~1) is a  maximal torus of $L^*$.
	Note that  we have  $\gcd(\ell_i, |{\bf Z}(L)|)=1$ and also $\pi({\rm{gcd}}(|T_1|,|T_2|))\subseteq \pi(|L^*: T_1|_{p'})\cup \pi(|L^*: T_2|_{p'})$. Let $\chi_i\in \mathscr{E}(L, (s_i))$ for $i\in \{1,2\}$, so that \(\chi_i\in\irr S\)  and   $\chi_i(1)$ is divisible by $|L^*: T_i|_{p'}$ for \(i\in\{1,2\}\). Then $\pi(S)=\pi(\chi_1)\cup \pi(\chi_2)\cup \{p\}$. If $\pi(G)=\pi(S)$, then we get the desired conclusion. For the case when $\pi=\pi(G) - \pi(S)$ is nonempty, we refer to the discussion in \cite[Subcase~3b, proof of Theorem~2.3]{HTV}).	
\end{proof}

\begin{table}[h!]\label{table1}
	\begin{center}	
		\caption{Two tori for  groups of Lie type in characteristic $p$}
		\scalebox{0.78}{
			\begin{tabular}{| c | c | c | c | c |} 
				\hline
				$ G = G(q) $, $q=p^f$  & $ |T_{1}| $ & $ |T_{2}| $ & $ \ell _{1} $ & $ \ell _{2} $\\
				\hline		
				$A_{n}(q) $ & $ (q^{n+1}-1)/(q-1) $ & $ q^{n}-1 $ & $ \ell_{n+1}(q) $ & $ \ell_{n}(q) $\\ [0.5ex]  
				${}^2A_{n}(q), n\equiv 0(4) $ & $ (q^{n+1}+1)/(q+1)  $ & $ q^{n}-1 $ & $ \ell_{2n+2}(q) $ &  $ \ell_{n}(q) $ \\
				[0.5ex] 
				${}^2A_{n}(q), n\equiv 1(4) $ & $ (q^{n+1}-1)/(q+1) $ & $ q^{n}+1 $ & $ \ell_{(n+1)/2}(q) $ &  $ \ell_{2n}(q) $\\
				[0.5ex] 
				${}^2A_{n}(q), n\equiv 2(4)  $ & $  (q^{n+1}+1)/(q+1) $ & $ q^{n}-1 $ & $ \ell_{2n+2}(q) $ &  $ \ell_{n/2}(q) $ \\
				[0.5ex] 
				${}^2A_{n}(q), n\equiv 3(4)  $ & $ (q^{n+1}-1)/(q+1) $ & $ q^{n}+1 $ & $ \ell_{n+1}(q) $ & $ \ell_{2n}(q)  $ \\ 
				[0.5ex] 
				$B_{n}(q), C_{n}(q), n \geq 3 \ odd  $ & $ q^{n}+1 $ & $ q^{n}-1 $  &  $ \ell_{2n}(q)  $ & $ \ell_{n}(q)   $ \\
				[0.5ex] 
				$B_{n}(q), C_{n}(q),  n \geq 2 \ even  $ &  $ q^{n}+1  $ &  $ (q^{n-1}+1)(q+1) $ &  $ \ell_{2n}(q)  $ & $ \ell_{2n-2}(q) $\\
				[0.5ex] 
				$D_{n}(q), n \geq 5 \  odd   $ & $ (q^{n-1}+1)(q+1) $ & $ q^{n}-1  $ & $ \ell_{2n-2}(q) $ & $ \ell_{n}(q) $ \\
				[0.5ex] 
				$D_{n}(q), n \geq 4 \ even  $ & $ (q^{n-1}+1)(q+1) $ & $ (q^{n-1}-1)(q-1) $ & $  \ell_{2n-2}(q) $ & $ \ell_{n-1}(q) $ \\
				[0.5ex] 
				$^{2}D_{n}(q) $ & $  q^{n}+1 $ & $ (q^{n-1}+1)(q-1) $ & $  \ell_{2n}(q) $ & $  \ell_{2n-2}(q)  $\\
				[0.5ex] 
				$ ^2B_2(q)$ & $q\pm\sqrt{2q}+1$  &  $q-1$ & $\ell_{4}(q)$ & $\ell_{1}(q)$\\
				[0.5ex] 
				$^2G_2(q)$ & $q\pm \sqrt{3q}+1$ & $q-1$ & $\ell_{6}(q)$ & $\ell_{1}(q)$\\
				[0.5ex] 
				$^2F_4(q)$ & $q^2+q+1\pm(\sqrt{3q}+ \sqrt{2q})$ & $q^2-q+1$ & $\ell_{12}(q)$  & $\ell_{6}(q)$\\	
				[0.5ex] 	
				$^3D_4(q)$ & $ q^4-q^2+1$ & $(q^3+1)(q+1)$ & $\ell_{12}(q)$ & $\ell_{6}(q)$\\
				[0.5ex] 
				$G_2(q)$ & $\Phi_6(q)$ &‌$\Phi_3(q)$ & $\ell_{6}(q)$ & $\ell_{3}(q)$\\	
				[0.5ex] 
				$F_4(q)$ & $\Phi_{12}(q)$ &‌$\Phi_8(q)$ & $\ell_{12}(q)$ & $\ell_{8}(q)$\\
				[0.5ex] 
				$E_8(q)$ & $\Phi_{30}(q)$ &‌$\Phi_{24}(q)$ & $\ell_{30}(q)$ & $\ell_{24}(q)$\\
				[0.5ex] 
				$E_6(q)$ & $\Phi_9(q)$ &‌$\Phi_{12}(q)\Phi_3(q)$ & $\ell_{9}(q)$ & $\ell_{12}(q)$\\
				[0.5ex] 
				$^2E_6(q)$ & $\Phi_{18}(q)$ &‌$\Phi_{12}(q)\Phi_6(q)$ & $\ell_{18}(q)$ & $\ell_{12}(q)$\\
				[0.5ex] 
				$E_7(q)$ & $\Phi_{18}(q)\Phi_2(q)$ &‌$\Phi_{14}(q)\Phi_2(q)$ & $\ell_{18}(q)$ & $\ell_{14}(q)$\\										
				[1ex] 
				\hline
			\end{tabular}
		}
		\label{table:1}
	\end{center}
\end{table}

\smallskip	
As the last preliminary result we recall the statement of Theorem~A in \cite{HTV}, that proves the strengthened \(\rho\)-\(\sigma\) conjecture for almost-simple groups.
	
\begin{theorem}\label{TV}
Let $G$ be an almost-simple group. Then $|\pi(G)|\leq 2\sigma(G)$ unless $G\cong\PSL{2^f}$ with $f\geq2$ and $|\pi(2^f-1)| = |\pi(2^f + 1)|$. For the exceptions, we have $|\pi(G)|\leq 2\sigma(G)+1$.
\end{theorem}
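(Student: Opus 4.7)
The plan is a case analysis on the socle $S$ of the almost-simple group $G$, driven by Proposition~\ref{HTV}. The key mechanism is that, given any $\chi\in\irr S$, Clifford theory produces an irreducible constituent $\psi\in\irr G$ of $\chi^G$ with $\pi(\psi)\supseteq\pi(\chi)\cup\pi(|G:I_G(\chi)|)$, so the quantity $|\pi(\chi)\cup\pi(|G:I_G(\chi)|)|$ is automatically bounded by $\sigma(G)$. Throughout, I would feed the two characters supplied by Proposition~\ref{HTV} into this mechanism.

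First, suppose $S$ is a sporadic simple group other than $J_1$, an alternating group $A_m$ with $m\geq 6$, or the Tits group. Then Proposition~\ref{HTV}(a) provides $\chi_1,\chi_2\in\irr S$ with $\pi(G)=\pi(S)=\pi(\chi_1)\cup\pi(\chi_2)$, and lifting each $\chi_i$ to some $\psi_i\in\irr G$ as above yields $|\pi(G)|\leq|\pi(\psi_1)|+|\pi(\psi_2)|\leq 2\sigma(G)$. For $S\cong J_1$ one has $G=S$ (since $\out{J_1}=1$), and a direct inspection of the Atlas character table gives $|\pi(J_1)|=6=2\sigma(J_1)$. The case $A_5$ is deferred to the Lie type discussion below.

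Next assume $S\in\mathcal{L}(p)$. Proposition~\ref{HTV}(b) gives $\pi(S)\setminus(\pi(\chi_1)\cup\pi(\chi_2))\subseteq\{p\}$ together with $\pi(G)\setminus\pi(S)\subseteq\pi(|G:I_G(\chi_i)|)$ for $i=1,2$. Therefore $\pi(G)\setminus\{p\}\subseteq\pi(\psi_1)\cup\pi(\psi_2)$, which already yields the weaker bound $|\pi(G)|\leq 2\sigma(G)+1$. To sharpen this to $2\sigma(G)$ outside the stated exception, the characteristic prime $p$ has to be absorbed. Since Proposition~\ref{HTV} only asserts divisibility of $\chi_i(1)$ by the $p'$-part $|L^*:T_i|_{p'}$, it can very well happen that $\chi_i(1)$ is already divisible by $p$, in which case we are done; otherwise I would invoke a third irreducible character of $G$ of small prime-support containing $p$, such as the Steinberg character or a unipotent character attached to a small Levi, and re-bundle the prime sets of $\psi_1,\psi_2$ accordingly.

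The critical obstacle is the exceptional family $G\cong\PSL{2^f}$, $f\geq 2$, where $\cd S=\{1,2^f-1,2^f,2^f+1\}$; hence $\sigma(G)=\max\{|\pi(2^f-1)|,|\pi(2^f+1)|\}$ and $|\pi(G)|=1+|\pi(2^f-1)|+|\pi(2^f+1)|$. A direct count shows $|\pi(G)|\leq 2\sigma(G)$ precisely when $|\pi(2^f-1)|\neq|\pi(2^f+1)|$, and in the balanced case one loses exactly one unit, producing the asserted bound $2\sigma(G)+1$. Ruling out further genuine exceptions among the thin Lie-type families---namely the small groups listed in \cite[List~$\mathcal{C}$]{HTV} and the infinite families ${\rm PSL}_3(q)$, ${\rm{PSU}}_3(q)$, ${\rm{PSp}}_4(q)$ where one Zsigmondy prime is missing---requires a careful case-by-case inspection, using explicit generic degree formulas for the infinite series and the Atlas for the residual finite list, to confirm that $p$ is always absorbable.
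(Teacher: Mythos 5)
There is a genuine gap, and it is worth noting first that the paper does not prove this statement at all: Theorem~\ref{TV} is quoted verbatim as Theorem~A of \cite{HTV}, so you are attempting to reconstruct an external result rather than the paper's own argument. Your Clifford-theoretic mechanism (any $\psi\in\irr{G|\chi}$ has $\pi(\psi)\supseteq\pi(\chi)\cup\pi(|G:I_G(\chi)|)$) is sound, and it correctly disposes of the sporadic, alternating and Tits cases via Proposition~\ref{HTV}(a), as well as the computation for $G\cong\PSL{2^f}$ itself. But for $S\in\mathcal{L}(p)$ your argument only delivers $|\pi(G)|\leq 2\sigma(G)+1$, and the device you propose for removing the ``$+1$'' cannot work as described. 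To get $|\pi(G)|\leq 2\sigma(G)$ you must cover $\pi(G)$ by the prime sets of \emph{two} irreducible characters of $G$; introducing a third character $\psi_3$ with $p\in\pi(\psi_3)$ only yields $3\sigma(G)$, and the Steinberg character is useless here since its degree is a power of $p$, so $\pi(\mathrm{St})=\{p\}$ and pairing it with one $\psi_i$ still leaves the primes of the other $\psi_i$ uncovered. The actual content of \cite[Theorem~2.3]{HTV} is precisely the careful choice of two characters such that one of them has degree divisible by $p$ \emph{together with} the relevant Zsigmondy primes and the outer-automorphism primes; ``re-bundling'' the semisimple characters of Proposition~\ref{HTV}, whose degrees $|L^*:T_i|_{p'}$ are coprime to $p$ by construction, does not achieve this.

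Two smaller but real omissions: (i) your treatment of the exceptional family covers only $G=S=\PSL{2^f}$, whereas the theorem asserts the clean bound $2\sigma(G)$ for every \emph{proper} almost-simple extension of $\PSL{2^f}$, which requires tracking how the field-automorphism primes enter both $\pi(G)$ and the character degrees; (ii) the claim that no further exceptions arise from \cite[List~$\mathcal{C}$]{HTV} and the thin families ${\rm PSL}_3(q)$, ${\rm PSU}_3(q)$, ${\rm PSp}_4(q)$ is asserted, not verified. As it stands your write-up proves the strengthened conjecture $|\pi(G)|\leq 2\sigma(G)+1$ for almost-simple groups, but not the dichotomy stated in Theorem~\ref{TV}.
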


\section{The $\rho$-$\sigma$ conjecture for groups with trivial Fitting subgroup}

In this section we present a proof of Theorem~A, that was stated in the Introduction. The result is obtained by combining Theorem~\ref{lessthan5} and Theorem~\ref{morethan5}, which provide the bounds in the case \(\sigma(G)\leq 5\) and \(\sigma(G)\geq 6\) respectively.

Recall that a \emph{component} of a group \(G\) is a non-trivial subnormal subgroup of \(G\) which is \emph{quasi-simple} (i.e., a perfect group whose factor group over its centre is simple). Denoting by \(\E G\) the subgroup generated by all  the components of \(G\), the \emph{generalized Fitting subgroup} \(\fitg G\) of \(G\) is then defined as the product \(\fit G\E G\). In the proof of Theorem~\ref{key} it will be useful to take into account that, if \(\fit G\) is trivial, then \(\fitg G\) is the product of all the minimal normal subgroups of \(G\) (see for instance \cite[6.5.5(b)]{KS}), and also that distinct components of \(G\) centralize each other (\cite[6.5.3]{KS}).

\begin{theorem} \label{key}
Let  $G$ be a group with  $\fit{G}=1$, and assume that  $G$ does not have any simple  characteristic subgroup. Then there exist $\psi_1, \psi_2\in \irr{\fitg{G}}$ (not necessarily distinct) 
such that,   
setting $m=\m{G/\fitg{G}}$, the following conclusions hold.
\begin{enumeratei}
\item $\pi_{\geq m}(G)\subseteq \bigcup^2_{i=1} \big( \pi(\psi_i)\cup \pi(G:I_G(\psi_i))\big)$;
\item $|\pr_m|\leq 2|\pi(G:I_G(\psi_i))\cap\pr_m|$  for $i=1,2$.
\end{enumeratei}
\end{theorem}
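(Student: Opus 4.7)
The plan is to build $\psi_1$ and $\psi_2$ as tensor-product characters on $\fitg G = \prod_{L \in \Omega} L$, where $\Omega$ is the set of components of $G$, by combining the permutation-theoretic information supplied by Proposition \ref{permutation} (applied to the action of $G/\fitg G$ on $\Omega$) with the simple-group character theory from Proposition \ref{HTV} (applied to each almost-simple section $\norm G L/\cent G L$). Because $\fit G = 1$ forces $\zent L \leq \fit G = 1$ for every component $L$, each $L \in \Omega$ is non-abelian simple, so $\fitg G = \E G$ is precisely the direct product of the members of $\Omega$ and $G/\fitg G$ acts on this set by conjugation.

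First I would apply Proposition \ref{permutation} to $G/\fitg G$ acting on $\Omega$ (so that $m = \m{G/\fitg G}$), obtaining disjoint subsets $\Gamma, \Delta \subseteq \Omega$ with $\pi_{\geq m}(G/\fitg G) \subseteq \pi_{\geq m}(G : G_\Gamma \cap G_\Delta)$ and $|\pr_m| \leq 2|\pi(G : G_\Gamma \cap G_\Delta) \cap \pr_m|$. Next, for each $G$-orbit $\mathcal{O}$ on $\Omega$ I fix a representative $L_{\mathcal{O}}$ and apply Proposition \ref{HTV} to the almost-simple group $A_{\mathcal{O}} = \norm G {L_{\mathcal{O}}}/\cent G {L_{\mathcal{O}}}$ (with socle $L_{\mathcal{O}}$), obtaining $\chi_{\mathcal{O},1}, \chi_{\mathcal{O}, 2} \in \irr{L_{\mathcal{O}}}$ of distinct degrees (read off distinct maximal tori as in Table \ref{table:1}), whose prime sets together cover $\pi(L_{\mathcal{O}})$ up to at most the defining characteristic, and with $\pi(A_{\mathcal{O}}) \setminus \pi(L_{\mathcal{O}})$ contained in each inertia index $\pi(A_{\mathcal{O}}: I_{A_{\mathcal{O}}}(\chi_{\mathcal{O}, i}))$. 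Transporting equivariantly along each orbit yields characters $\chi_{L, i} \in \irr L$ for all $L \in \Omega$. I then set
\[
\lambda_{L,1} = \begin{cases} \chi_{L,1}, & L \in \Gamma,\\ \chi_{L,2}, & L \in \Delta,\\ 1_L, & \text{otherwise,} \end{cases}
\qquad
\lambda_{L,2} = \begin{cases} \chi_{L,2}, & L \in \Gamma,\\ \chi_{L,1}, & L \in \Delta,\\ 1_L, & \text{otherwise,} \end{cases}
\]
and put $\psi_i = \bigotimes_{L \in \Omega} \lambda_{L, i} \in \irr{\fitg G}$.

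The central observation is that $I_G(\psi_i) \leq G_\Gamma \cap G_\Delta$ for both $i$: any $g \in I_G(\psi_i)$ permutes components preserving the non-trivial support $\Gamma \cup \Delta$, and because within each orbit the characters $\chi_{\mathcal{O}, 1}$ and $\chi_{\mathcal{O}, 2}$ have distinct degrees, $g$ cannot exchange a $\Gamma$-component with a $\Delta$-component while still fixing $\psi_i$. It follows that $|G : G_\Gamma \cap G_\Delta|$ divides $|G : I_G(\psi_i)|$, so conclusion (b) follows at once from Proposition \ref{permutation}(b). For (a), write $\pi_{\geq m}(G) = \pi_{\geq m}(G/\fitg G) \cup \pi_{\geq m}(\fitg G)$: the first summand is absorbed into $\pi(G : I_G(\psi_i))$, while $\pi_{\geq m}(\fitg G) = \bigcup_{L \in \Omega} \pi_{\geq m}(L)$, and for every $L \in \Gamma \cup \Delta$ we have $\pi(L) \setminus \{p_L\} \subseteq \pi(\chi_{L,1}) \cup \pi(\chi_{L,2}) \subseteq \pi(\psi_1) \cup \pi(\psi_2)$ by Proposition \ref{HTV}.

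The hardest part will be absorbing the residual primes that this scheme does not cover directly: the defining characteristic $p_L$ of Lie-type components $L \in \Gamma \cup \Delta$, and the entirety of $\pi_{\geq m}(L)$ for components $L$ sitting outside $\Gamma \cup \Delta$. For these I expect to rely on the outer-primes clause of Proposition \ref{HTV}(b), which injects $\pi(A_{\mathcal{O}}) \setminus \pi(L_{\mathcal{O}})$ into $\pi(G : I_G(\psi_i))$ through the internal contribution to the inertia index; together with the hypothesis that $G$ has no simple characteristic subgroup, which guarantees that every component lies in an $\aut G$-orbit of size at least two and hence that a missing prime reappears in the structure of another component; and on the freedom, noted after Proposition \ref{permutation}, to choose $\Gamma$ and $\Delta$ so that $\Gamma \cup \Delta$ meets every relevant $G$-orbit. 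Balancing these ingredients so that every prime in $\pi_{\geq m}(G)$ is accounted for is the delicate final step.
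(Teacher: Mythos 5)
Your overall strategy --- tensoring characters supplied by Proposition \ref{HTV} over the components selected via Proposition \ref{permutation} --- is the same engine the paper uses, but as written the construction does not prove claim (a), and the step you defer as ``the delicate final step'' is precisely where the proof lives. The concrete failure is the defining characteristic. For a Lie-type component $L$, Proposition \ref{HTV} only guarantees $\pi(L) - (\pi(\chi_{L,1}) \cup \pi(\chi_{L,2})) \subseteq \{p_L\}$, and your two characters $\psi_1,\psi_2$ merely permute $\chi_{L,1}$ and $\chi_{L,2}$ among the components, so $\pi(\psi_1)\cup\pi(\psi_2)$ never picks up $p_L$. Take $G=(S\times S)\rtimes C_2$ with $S=\PSL{q}$, $q=p^f$, $p>5$, and the involution swapping the factors: here $\fit G=1$, there is no simple characteristic subgroup, and $m=\m{C_2}=0$, so (a) demands all of $\pi(G)$; yet the two relevant characters of $S$ have degrees $q-1$ and $q+1$, so $p\notin\pi(\psi_1)\cup\pi(\psi_2)$, while $I_G(\psi_i)\supseteq S\times S$ gives $\pi(G:I_G(\psi_i))\subseteq\{2\}$, and $p$ is lost. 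Neither the outer-primes clause nor the absence of a simple characteristic subgroup can rescue this, as the example shows. The paper's fix is the third character $\xi$ of Proposition \ref{HTV} with $p\in\pi(\xi)$: its $\psi_2$ places $\xi$ (not $\chi_1$) on the $\Delta$-components, which is exactly how the union over $i=1,2$ recovers $p$.

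Two further points. First, your containment $I_G(\psi_i)\leq G_\Gamma\cap G_\Delta$ rests on $\chi_{\mathcal O,1}(1)\neq\chi_{\mathcal O,2}(1)$, which Proposition \ref{HTV} does not assert (and there are no tori to ``read off'' for sporadic or alternating socles); the paper explicitly treats the degenerate cases $\chi_1(1)=\chi_2(1)\neq\xi(1)$ and $\chi_1(1)=\chi_2(1)=\xi(1)$ by modifying the characters. Second, you work with all components at once, so you must simultaneously control several isomorphism types, the primes of components outside $\Gamma\cup\Delta$, and the solvable kernel of the action on $\Omega$ modulo $\fitg G$. The paper sidesteps most of this by induction on $|G|$: it takes a minimal characteristic subgroup $M$ and $N=\cent GM$, handles $N\neq 1$ by applying the inductive hypothesis to $G/N$ and to $N$ and multiplying the resulting characters, and in the base case $N=1$ all simple factors of $M=\fitg G$ are isomorphic and the groups $K/\cent KS$ are pairwise isomorphic --- so equal degrees can be arranged across components, the primes of unselected components are automatically covered, and the outer primes are absorbed through the inertia indices of the $\chi_1^{(i)}$ inside the kernel $K$. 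You would need to supply an argument of comparable strength to close your final step; as it stands the proposal is a reasonable skeleton with a genuine gap at its center.
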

\begin{proof}
  We argue by induction on $|G|$. Let $M$ be a minimal characteristic subgroup of $G$, and define $N=\cent GM$ (note that \(N\) is a characteristic subgroup of \(G\) as well).
  Then $\fit{G/N}=1$ and $\fitg{G/N}=MN/N$, so the factor group of \(G/N\) over its generalized Fitting subgroup is isomorphic to \(G/MN\); also, it is easy to see that \(G/N\) does not contain any simple characteristic subgroup.
  Assume first that $N\not =1$, and set $m_1=\m{G/MN}$. By induction, there exist $\b{\lambda}_1 , \b{\lambda}_2\in \irr{MN/N}$ such that 
	
	$$\pi_{\geq m_1}(G/N)\subseteq \bigcup^2_{i=1}\left( \pi(\b{\lambda}_i)\cup \pi(G/N:I_{G/N}(\b{\lambda}_i)) \right), $$ 
        and also, $|\pr_{m_1}|\leq 2|\pi(G/N: I_{G/N}(\b{\lambda_i}))\cap\pr_{m_1}|$ for $i\in\{1,2\}$. Regarding \(\overline{\lambda}_i\) as a character of \(MN\) by inflation, 
let $\lambda_i\in \irr{M}$ be such that \(\lambda_i\times 1_N=\overline{\lambda}_i\). 
        Then   
        clearly $I_G(\lambda_i)/N = I_{G/N}(\b{\lambda}_i)$.
        
	Observe now that $\fit{N}=1$. Moreover, taking into account the paragraph preceding this theorem, we have $\fitg{G}=M\times \fitg{N}$. Note also that, obviously, \(N\) does not contain any simple characteristic subgroup. Let $m_2= \m{N/\fitg{N}}$. By induction,  there exist $\mu_1$ and $\mu_2\in \irr{ \fitg{N}}$, such that 	
	$$\pi_{\geq m_2}(N)\subseteq \bigcup_{i=1}^2 \big(\pi(\mu_i)\cup \pi(N:I_N(\mu_i)) \big)$$
		 and $|\pr_{m_2}|\leq 2|\pi(N:I_N(\mu_i))\cap\pr_{m_2}|$, for $i\in\{1,2\}$.
		
                Take now $\psi_1=\lambda_1\times \mu_1$ and \(\psi_2=\lambda_2\times\mu_2\) in $\irr{\fitg{G}}$.
                Since $G/\fitg{G}$ is an extension of $MN/\fitg{G}\cong N/\fitg{N}$,
                by  $G/MN$, we have $m={\rm{max}}\{m_1,m_2\}$ by  Lemma~\ref{lm}.
                As $I_G(\psi_i)=I_G(\lambda_i)\cap I_G(\mu_i)$,  we conclude that  $\pi(G:I_G(\lambda_i))\cup \pi(N:I_N(\mu_i))\subseteq \pi(G:I_G(\psi_i))$, and so (a) holds. Moreover, since \(\pr_m\) coincides either with $\pr_{m_1}$ or with $ \pr_{m_2}$, claim (b) follows as well. 
	
                Hence we may assume $N=1$, so $M=\fitg{G}$ is the socle of \(G\).
                Now, $M$ is the direct product of a set $\Omega$ of subgroups that are all isomorphic to a suitable non-abelian simple group \(S\), and that are permuted by conjugation by $G$. Denoting by $K$ the kernel of this action, we remark that  the permutation group \(G/K\) on \(\Omega\) is non-trivial because \(M\) is not a simple group.
Note also that \(K\) is a characteristic subgroup of \(G\), and that \(\aut G\) transitively permutes the set \(\{\cent K S\mid S\in\Omega\}\); as a consequence, the factor groups \(K/\cent K S\) are pairwise isomorphic when \(S\) ranges over \(\Omega\).

                Noting that  $K/M$ is solvable, since it is a subgroup of a direct product of copies of $\out S$, we see that  $\m{G/K}=\m{G/M} = m$.
                By Proposition~\ref{permutation}, there exist two disjoint subsets $\Gamma, \Delta \subseteq \Omega$ such that
	
	$$\pi_{\geq m}(G/K) = \pi_{\geq m}(G: G_{\Gamma}\cap G_{\Delta})$$
and 

$$|\pr_m|\leq 2|\pi(G: G_{\Gamma}\cap G_{\Delta})\cap\pr_m|;$$
as observed in the paragraph following Proposition~\ref{permutation}, we can assume that both $\Gamma$ and $\Delta$ are non-empty.

Now, take \(S\in\Omega\) and define \(C=\cent K S\). An application of Proposition~\ref{HTV} to the almost-simple group \(K/C\) (of socle \(SC/C\simeq S\)) yields the existence of \(\chi_1,\chi_2,\xi\) in \(\irr S\) such that \(\pi(\chi_1)\cup\pi(\chi_2)\) contains all the primes in \(\pi(S)\), except possibly a single prime \(p\) that is recovered as an element of \(\pi(\xi)\), and both \(|K:I_K(\chi_1)|\), \(|K:I_K(\chi_2)|\) contain all the primes in \(\pi(K/C)-\pi(S)\). Writing \(\Omega=\{S_1,\ldots S_k\}\), we choose \(\chi_1^{(i)},\chi_2^{(i)},\xi^{(i)}\in\irr{S_i}\) in this way; since the groups \(K/\cent K{S_i}\) are pairwise isomorphic, we can assume \(\chi_1^{(i)}(1)=\chi_1^{(j)}(1)\) and \(\chi_2^{(i)}(1)=\chi_2^{(j)}(1)\) for all \(i,j\in\{1,\ldots,k\}\).

Next, we define \(\psi_1\in\irr M\) as follows. For \(S_i\) belonging to \(\Gamma\) we take the character \(\chi_1^{(i)}\), for \(S_j\) in \(\Delta\) we take \(\chi_2^{(j)}\), and for \(S_{t}\) in \(\Omega-(\Gamma\cup\Delta)\) we take the trivial character; then, we set \(\psi_1\) to be the direct product of all these characters. Similarly we define \(\psi_2\in\irr M\), just replacing \(\chi_2^{(j)}\) with \(\xi^{(j)}\) for all \(S_j\in\Delta\), so we clearly have \(\pi(M)\subseteq\pi(\psi_1)\cup\pi(\psi_2)\). Moreover, as \(I_K(\psi_1)\) and \(I_K(\psi_2)\) both lie in \(\bigcap_{i=1}^k I_K(\chi_1^{(i)})\), we see that \(\pi(G:I_G(\psi_1))\) and \(\pi(G:I_G(\psi_2))\) (which clearly contain \(\pi(K:I_K(\psi_1))\) and \(\pi(K:I_G(\psi_2))\), respectively) both contain \(\bigcup_{i=1}^k\pi(K/\cent K {S_i})-\pi(S)\). In view of the fact that \(\bigcap_{i=1}^k\cent K{S_i}\) is trivial, so that \(K\) embeds in the direct product of the groups \(K/\cent K{S_i}\), we conclude that \(\bigcup^2_{i=1} \pi(\psi_i)\cup \pi(G:I_G(\psi_i))\) contains \((\pi(K)-\pi(M))\cup\pi(M)=\pi(K)\).

Moreover, \emph{assuming that the degrees of \(\chi_1\),\(\chi_2\) are distinct} (and swapping the role of \(\chi_1\), \(\chi_2\) if \(\chi_1(1)=\xi(1)\)), it is easy to see that both \(I_G(\psi_1)\) and \(I_G(\psi_2)\) lie in \(G_{\Gamma}\cap G_{\Delta}\), therefore \(\pi(G:I_G(\psi_1))\) and \(\pi(G:I_G(\psi_2))\) contain \(\pi_{\geq m}(G/K)\). Thus claim (a) is proved, and also claim~(b) follows immediately.

If \(\chi_1(1)=\chi_2(1)\neq\xi(1)\), then one character (namely \(\psi_2\)) is enough to obtain the desired conclusions. If, finally, we have \(\chi_1(1)=\chi_2(1)=\xi(1)\), then the argument needs a small adjustment: namey, in the definition of \(\psi_1\), for the groups \(S_j\) belonging to \(\Delta\) we choose any non-linear irreducible character whose degree is different from \(\chi_1(1)\) in place of \(\chi_2^{(j)}\), and \(\psi_1\) so modified is enough to obtain the desired conclusions.
\end{proof}
 
 \begin{corollary}\label{key2}
   If $G$ is a group such that $\fit{G}=1$  and  $G$ does not have any simple  characteristic subgroup, then $|\pi(G)|\leq 2\sigma(G)$. 
 \end{corollary}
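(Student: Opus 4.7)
The plan is to combine Theorem~\ref{key} with Clifford's theorem to reduce the corollary to an elementary counting argument. First I would apply Theorem~\ref{key} and obtain $\psi_1, \psi_2 \in \irr{\fitg G}$ satisfying conclusions (a) and (b), with $m = \m{G/\fitg G}$. Then, for each $i \in \{1,2\}$, I would select an irreducible character $\chi_i$ of $G$ lying over $\psi_i$; Clifford's theorem ensures that $\chi_i(1)$ is a multiple of $[G:I_G(\psi_i)]\cdot\psi_i(1)$, so that
\[
\pi(\psi_i) \cup \pi(G:I_G(\psi_i)) \subseteq \pi(\chi_i),
\]
and of course $|\pi(\chi_i)| \leq \sigma(G)$.

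Next I would split $\pi(G)$ at the threshold $m$. Writing $\alpha_i = |\pi(\chi_i) \setminus \pr_m|$ and $\beta_i = |\pi(\chi_i) \cap \pr_m|$, one has $\alpha_i + \beta_i = |\pi(\chi_i)| \leq \sigma(G)$. The primes of $\pi(G)$ that exceed $m$ lie in $\pi_{\geq m}(G)$, which by (a) is contained in $\pi(\chi_1) \cup \pi(\chi_2)$, yielding at most $\alpha_1 + \alpha_2$ such primes. For the primes of $\pi(G)$ that are at most $m$, conclusion (b) combined with $\pi(G:I_G(\psi_i)) \subseteq \pi(\chi_i)$ gives $|\pr_m| \leq 2\beta_i$ for both $i$, whence $|\pr_m| \leq \beta_1 + \beta_2$ by averaging the two inequalities. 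Summing the two contributions,
\[
|\pi(G)| \leq (\alpha_1 + \alpha_2) + (\beta_1 + \beta_2) = |\pi(\chi_1)| + |\pi(\chi_2)| \leq 2\sigma(G).
\]

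There is no substantial obstacle once Theorem~\ref{key} is available; the whole content of the argument is that cutting at the threshold $m$ aligns the two clauses of the theorem with complementary pieces of $\pi(G)$, so that (a) handles the primes above $m$ and (b) handles those at or below. The degenerate case $m = 0$ is immediate, since then $\pr_m = \emptyset$ and (a) alone already gives $\pi(G) \subseteq \pi(\chi_1) \cup \pi(\chi_2)$.
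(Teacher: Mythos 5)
Your proposal is correct and follows essentially the same route as the paper: apply Theorem~\ref{key}, pass to $\chi_i\in\irr{G|\psi_i}$ via Clifford theory so that $\pi(\psi_i)\cup\pi(G:I_G(\psi_i))\subseteq\pi(\chi_i)$, and count. The paper compresses the final counting into one sentence; your explicit split of $\pi(G)$ at the threshold $m$, with (a) covering the primes above $m$ and (b) covering $\pr_m$, is exactly the argument being invoked there.
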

\begin{proof}
  Let $\psi_1, \psi_2$ be the characters in $\irr{\fitg{G}}$ provided by Theorem~\ref{key} and, for \(i\in\{1,2\}\), take $\chi_i\in \irr{G|\psi_i}$. By Clifford Theory we have  $\pi(\psi_i)\cup \pi(G:I_G(\psi_i))\subseteq \pi(\chi_i)$, whence, for some $i\in \{1,2\}$, we get $2|\pi(\chi_i)|\geq |\pi(G)|$ by Theorem~\ref{key}.
	\end{proof}

We remark that the argument of Theorem~\ref{key} can be easily adapted to prove the following statement.

\smallskip
\noindent{\sl{Let  $G$ be a group with $\fit{G}=1$, and assume that  $G$ does not have  any simple  characteristic subgroup. Assume further that \(|G|\) is not divisible by \(3\). Then there exist $\psi\in \irr{\fitg{G}}$ such that $\pi(G)=\pi(\psi)\cup \pi(G:I_G(\psi))$.}}

\smallskip 
To this end, one should take into account that the assumption of \(|G|\) not being a multiple of \(3\) yields \(\m G=0\); moreover, the same assumption yields that every simple subnormal subgroup \(S\) of \(G\) is a Suzuki group, and so it is possible to find \(\chi_1\), \(\chi_2\) in \(\irr S\) such that \(\pi(\chi_1)\cup\pi(\chi_2)=\pi(S)\) (see \cite[Page 182]{HB}).

 As a consequence it is immediate to see that, under the same hypotheses, we have \(|\pi(G)|=\sigma(G)\). We omit the full proofs of these facts because they amount to a straightforward modification of what already seen in Theorem~\ref{key} and Corollary~\ref{key2}, and also because these results are not essential for the rest of this paper.

\medskip
We are now in a position to prove the two theorems that together constitute Theorem~A.

\begin{theorem}\label{lessthan5}
Let $G$ be a group with trivial Fitting subgroup. If \(\sigma(G)\leq 5\), then we have \(|\pi(G)|\leq2\sigma(G)+1\).
\end{theorem}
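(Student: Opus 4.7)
My approach is by induction on $|G|$, splitting into cases according to whether $G$ has a simple characteristic subgroup. If $G$ has none, Corollary~\ref{key2} directly yields the stronger inequality $|\pi(G)|\le 2\sigma(G)$, and we are done. Otherwise, let $S$ be such a simple characteristic subgroup of $G$; since $\fit G=1$, $S$ is non-abelian simple. Set $C:=\cent G S$, which is characteristic in $G$ and therefore satisfies $\fit C\le\fit G=1$. If $C=1$, then $G$ embeds into $\aut S$ and is almost-simple, so Theorem~\ref{TV} yields $|\pi(G)|\le 2\sigma(G)+1$ directly.

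The substantive case is $C\ne 1$. Since every $\psi\in\irr C$ lies under some $\eta\in\irr G$ with $\psi(1)\mid\eta(1)$, we have $\sigma(C)\le\sigma(G)\le 5$, and induction on $|G|$ gives $|\pi(C)|\le 2\sigma(C)+1$. Simultaneously, $G/C$ is almost-simple with $SC/C\cong S$, so Theorem~\ref{TV} gives $|\pi(G/C)|\le 2\sigma(G/C)+1$. Since $\pi(G)=\pi(G/C)\cup\pi(C)$, the proof reduces to obtaining a sufficiently strong lower bound on $\sigma(G)$ in terms of $\sigma(G/C)$ and $\sigma(C)$. To that end, I would pick $\mu\in\irr{G/C}$ realising $\sigma(G/C)$ and $\psi\in\irr C$ realising $\sigma(C)$, guided by Proposition~\ref{HTV} applied to $G/C$ (which, via characters $\chi_1,\chi_2\in\irr S$ and their inertia indices, controls $\pi(G/C)\setminus\{p\}$) and by Theorem~\ref{key} or the induction hypothesis applied to $C$. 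Using $C\le I_G(\psi)$ and Clifford theory, one produces $\eta\in\irr G$ whose degree is divisible by $\mu(1)\psi(1)$, hence
\[
\sigma(G)\;\ge\;\sigma(G/C)+\sigma(C)-|\pi(\mu)\cap\pi(\psi)|.
\]

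The principal obstacle is that naive addition of the two inductive bounds yields $|\pi(G)|\le 2\sigma(G)+O(1)$ with a constant that is too large, so I must close a residual gap by exploiting the primes shared between $\pi(G/C)$ and $\pi(C)$. Here $\fit C=1$ combined with $C\ne 1$ forces $C$ to contain a non-abelian simple component, and together with $S$ this guarantees $\{2,3\}\subseteq\pi(S)\cap\pi(C)\subseteq\pi(G/C)\cap\pi(C)$, giving at least two automatic shared primes to subtract when bounding $|\pi(G/C)\cup\pi(C)|$. Combining this with a coordinated choice of $\mu$ and $\psi$ (aimed at minimising $|\pi(\mu)\cap\pi(\psi)|$, potentially by swapping $\mu$ among the two characters produced in Proposition~\ref{HTV}), and performing a short finite case analysis driven by $\sigma(G)\le 5$ (which severely restricts the isomorphism types of $S$ and the admissible structure of $C$ that can produce borderline configurations), should close the gap and deliver $|\pi(G)|\le 2\sigma(G)+1$.
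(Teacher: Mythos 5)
Your opening reductions coincide with the paper's: Corollary~\ref{key2} when there is no simple characteristic subgroup, Theorem~\ref{TV} when $C=\cent GS=1$, and induction applied to $C$ otherwise. But from there the argument has two genuine problems. First, the inequality $\sigma(G)\ge\sigma(G/C)+\sigma(C)-|\pi(\mu)\cap\pi(\psi)|$ is not justified: $\mu$ lives in the quotient $G/C$ while $\psi$ lives in the normal subgroup $C$, and Clifford theory gives no character $\eta\in\irr G$ with $\mu(1)\psi(1)\mid\eta(1)$ in general (the extension $G/C$ need not split over $SC/C$, and even when it does, $\mu$ may involve degree contributions from $I_{G/C}(\chi_i)/SC$ that do not reappear over $\chi_i\times\psi$). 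The paper avoids this by working entirely inside the normal subgroup $SC=S\times C$: it takes $\delta\in\irr C$ with $|\pi(\delta)|=\sigma(C)$ and characters of $G$ over $\delta\times\chi_i$, and recovers the primes of $\pi(G/C)-\pi(S)$ from the inertia index $|G:I_G(\chi_i)|$ via Proposition~\ref{HTV}(b); this yields only the weaker (but correct) bound $\sigma(G)\ge\sigma(C)+\max_i|\pi_i-\pi(C)|$. Second, your ``automatic'' overlap $\{2,3\}\subseteq\pi(S)\cap\pi(C)$ is false as stated: $S$ or the components of $C$ could be Suzuki groups, whose orders are coprime to $3$. The paper only establishes $3\mid|S|$ and $3\mid|C|$ for a \emph{minimal counterexample}, and only after its claims (i)--(iv).

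More importantly, even granting both of your inequalities, the arithmetic does not close: with $|\pi(G/C)\cap\pi(C)|\ge 2$ you get $|\pi(G)|\le 2(\sigma(G/C)+\sigma(C))$, which beats $2\sigma(G)+1$ only if $\sigma(G)\ge\sigma(G/C)+\sigma(C)$, i.e.\ only if $\mu$ and $\psi$ can be chosen with disjoint degree primes --- which fails precisely in the extremal configurations (direct products of groups $\PSL{q}$ as in Example~\ref{ex} with $n=3$, where $|\pi(G)|=11=2\sigma(G)+1$ is attained with zero slack). Closing this residual deficit is not a ``short finite case analysis'': it is the bulk of the paper's proof, namely claims (i)--(vii) (forcing $|\pi(C)|=2\sigma(C)+1$ exactly, ruling out characteristic $2$ and $3$ for the associated primes, proving $G$ is a direct product $(S_1\times\cdots\times S_k)\times U$) followed by a delicate elimination of $|\mathcal K|\ge 4$, $|\mathcal K|=3$ and $|\mathcal K|=2$ using explicit character degrees of $\PSL{q}$, ${\rm PSL}_3(q)$, ${\rm PSU}_3(q)$ and the main result of \cite{MT}. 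That content is asserted but not supplied in your proposal, so the proof is incomplete at its decisive step.
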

\begin{proof}

Let \(G\) be a counterexample of minimal order to the statement. We start by observing that, by Corollary~\ref{key2}, the set of simple normal subgroups of \(G\) is non-empty. So, let us consider any simple normal subgroup \(S\) of \(G\), and set \(C=\cent G S\); note that \(C\) cannot be trivial, as otherwise \(G\) would be an almost-simple group and, in view of Theorem~\ref{TV}, it would not be a counterexample. Moreover, \(C\) clearly satisfies our assumptions and, being strictly smaller than \(G\), it also satisfies the conclusion \(|\pi(C)|\leq2\sigma(C)+1\).

Let us fix the following convention. If the simple group \(S\) is in \(\mathcal{L}(p)\), then we call \(p\) the \emph{associated prime} of \(S\) (in the cases when \(S\) has multiple characteristic, we choose \(p\) to be the smallest among them) and, if \(S\cong J_1\), then we set \(11\) to be the associated prime of \(S\); for any other isomorphism type of \(S\), we say that \(S\) has no associated prime. Now we apply Proposition~\ref{HTV} to the almost-simple group \(G/C\), whose socle is isomorphic to \(S\): there exist \(\chi_1\) and \(\chi_2\) in \(\irr {S}\) such that $\pi(S)=\pi(\chi_1) \cup \pi(\chi_2)$ unless \(S\) has an associated prime \(p\), in which case we have  $\pi(S)-(\pi(\chi_1) \cup \pi(\chi_2))\subseteq\{p\}$. Moreover, defining \(\pi_0=\pi(G/C)-\pi(S)\), we have $\pi_0 \subseteq \pi(|G:I_G(\chi_i)|)$ for \(i\in\{1,2\}\). If we set $\pi_1 = \pi(\chi_1) \cup \pi_0$, $\pi_2 = \pi(\chi_2) \cup \pi_0$, and $\pi_3$ to be either $\{p\}$ or $\emptyset$ according on whether \(S\) has an associated prime \(p\) or not, we can write  
$\pi(G) = \pi(C) \cup \bigcup_{i=1}^3 (\pi_i - \pi(C))$, and therefore
\begin{equation} \label{eq1}
  |\pi(G)| \leq |\pi(C)| + \sum_{i=1}^3 |\pi_i - \pi(C)|.
\end{equation}
(Note that \(|\pi_3-\pi(C)|\leq 1\).)
If $S$ has an associated prime $p$, then we take \(\chi_3\in\irr{S}\) whose degree is divisible by \(p\); otherwise we just take $\chi_3= 1_S$. 
Let \(\delta\in\irr C\) such that \(|\pi(\delta)|=\sigma(C)\), for \(i\in\{1,2,3\}\) we can consider irreducible characters of $G$ lying above  $\delta \times \chi_i$, and we deduce that
\begin{equation}\label{eq2}
\sigma(G) \geq  \sigma(C) + \max_{i\in\{1,2,3\}}\{|\pi_i - \pi(C)|\}.
\end{equation}

\medskip
As the next step, we will prove a number of claims.  

\smallskip
\noindent{\bf{(i)}} {\sl{For \(i\neq j\in\{1,2\}\), \(\pi(\chi_i)-\pi(C)\) is not contained in \(\pi_j\cup\pi_3\), and \(\pi_3-\pi(C)\) is not contained in \(\pi_1\cup\pi_2\); in particular, \(\pi(\chi_1)-\pi(C)\), \(\pi(\chi_2)-\pi(C)\), \(\pi_3-\pi(C)\) are non-empty, and \(\pi(S)\) contains at least two primes (one in \(\pi(\chi_1)-\pi(\chi_2)\) and the other in \(\pi(\chi_2)-\pi(\chi_1)\)) that are not in \(\pi(C)\cup\pi_0\cup\pi_3\).}}

For a proof by contradiction, assume that \(\pi(\chi_1)-\pi(C)\) is contained in \(\pi_2\cup\pi_3\); then, taking into account that \(\pi_0\) lies in \(\pi_2\), we get $\pi(G) = \pi(C) \cup \bigcup_{i=2}^3 (\pi_i - \pi(C))$, and Inequality~(1) gives \[|\pi(G)|\leq|\pi(C)|+2\max_{i\in\{2,3\}}\{|\pi_i - \pi(C)|\}.\] Recalling that \(|\pi(C)|\leq2\sigma(C)+1\), we obtain \[|\pi(G)|\leq 2(\sigma(C)+\max_{i\in\{2,3\}}\{|\pi_i - \pi(C)|\})+1\] and therefore, in view of Inequality (2), we get the contradiction \(|\pi(G)|\leq2\sigma(G)+1\). The same argument of course works for the other inclusions as well, and the remaining parts of the claim can be easily deduced.

\smallskip
The fact that \(\pi_3-\pi(C)\) is non-empty yields, on one hand, that \(\sigma(G)\) is strictly larger than \(\sigma(C)\) (taking into account Inequality (2)). On the other hand, \(S\) must have an associated prime which does not lie in \(\pi(C)\): 

\smallskip
\noindent{\bf{(ii)}} {\sl{\(S\) is either of Lie type in odd characteristic, or \(S\cong J_1\). In particular, \(|S|\) is divisible by \(3\).}} 

Here, the only thing to observe is that \(S\) cannot lie in \(\mathcal{L}(2)\) because \(C\) has even order, as \(C\neq 1\) and \(\fit C=1\).

\smallskip
Since two (distinct) simple normal subgroups of \(G\) centralize each other, we also deduce the following: 

\smallskip
\noindent{\bf{(iii)}} {\sl{The associated primes of the simple normal subgroups of \(G\) are pairwise distinct.}} 

\smallskip
Some other remarks concerning the centralizer \(C\) of a simple normal subgroup $S$ of $G$:

\smallskip
\noindent{\bf{(iv)}} {\sl{\(|\pi(C)|=2\sigma(C)+1\). In addition, \(C\) has a simple characteristic subgroup and \(|C|\) is divisible by \(3\); furthermore, \(C\) is not an almost-simple group.}} 

In fact, we know that \(|\pi(C)|\leq 2\sigma(C)+1\), so, if the claim is false, then we have \(|\pi(C)|\leq 2\sigma(C)\). Now, using again inequalities (1) and (2) as in {\bf{(i)}} and recalling that \(|\pi_3-\pi(C)|= 1\), we get \(|\pi(G)|\leq2\sigma(G)+1\), which is not the case. 
Given that, \(C\) must have a simple characteristic subgroup by Corollary~\ref{key2} and, as this subgroup is a simple normal subgroup of \(G\), by {\bf{(ii)}} its order is divisible by \(3\). Finally, if \(C\) is an almost-simple group, its socle (which is a simple normal subgroup of \(G\)) in this situation must be then isomorphic to a projective special linear group in characteristic \(2\) by Theorem~\ref{TV}, and this is against claim {\bf{(ii)}}.

\smallskip
Since we have seen that \(3\) divides \(|C|\), claim {\bf{(i)}} (namely, \(\pi_3-\pi(C)\neq\emptyset\)) yields also:

\noindent{\bf{(v)}} {\sl{\(S\not\in\mathcal{L}(3)\)}}.

\smallskip 
Next,

\smallskip
\noindent{\bf{(vi)}} {\sl{Both \(\sigma(S)\) and \(\sigma(C)\) lie in \(\{3,4\}\).}} 

To the end of showing \(\sigma(S)\geq 3\), by Lemma~2.4 in \cite{HTV} we only have to rule out the case \(S\cong\PSL q\) (where \(q\) is a \(p\)-power with \(p>3\)) and \(|\pi(q\pm 1)|\leq 2\); but in this situation the degree of (say) \(\chi_1\), as defined in the second paragraph of this proof, is divisible only by \(2\) and \(3\), so \(\pi(\chi_1)-\pi(C)\) is empty, against {\bf{(i)}}. As for \(\sigma(C)\leq 4\), this follows at once by the fact that, as observed, we have \(\sigma(C)<\sigma(G)\); the remaining conclusions also follow easily, taking into account that (by {\bf{(iv)}}) \(C\) contains a simple normal subgroup of \(G\).  

\smallskip
Consider now the set \(\mathcal{K}=\{S_1,S_2,\ldots,S_k\}\) of all the simple normal subgroups of \(G\). As our last preliminary claim, we show the following:

\smallskip
\noindent{\bf{(vii)}} {\sl{\(G=(S_1\times S_2\times\cdots\times S_k)\times U\), where \(|\pi(U)|\leq 2\sigma(U)\).}} 

Let \(S\in\mathcal{K}\). Recalling that (by {\bf{(vi)}}) we have \(\sigma(C)\in\{3,4\}\), assume first \(\sigma(C)=3\) (which implies \(|\pi(C)|= 2\cdot 3+1=7\)) and \(\sigma(G)=5\): then Inequality (2) yields \(|\pi_i-\pi(C)|\leq 2\) for \(i\in\{1,2,3\}\). Now, \(\pi_1-\pi(C)\) cannot contain any element \(u\) of \(\pi_0\), as otherwise \(u\) would lie in \(\pi_2-\pi(C)\) as well, and the set \(X=(\pi_1-\pi(C))\cup(\pi_2-\pi(C))\) would contain at most three elements. As a consequence of the fact that \(\pi(G)\) is covered by \(\pi(C)\cup X\cup(\pi_3-\pi(C))\) we would then obtain \(|\pi(G)|\leq 7+3+1=11=2\sigma(G)+1\), against our assumption. We conclude that \(\pi(G)=\pi(SC)\) in this case. 

But the same holds also whenever \(\sigma(G)-\sigma(C)=1\) (which covers all the remaining possibilities): in fact, in the latter situation, \(\pi_i-\pi(C)\) is a singleton and (by {\bf{(i)}}) it must be contained in \(\pi(\chi_i)\) for \(i\in\{1,2\}\). 

Now, if \(SC\) is strictly contained in \(G\), our minimality assumption leads again to the contradiction \(|\pi(G)|=|\pi(SC)|\leq2\sigma(SC)+1\leq2\sigma(G)+1\). Thus we have \(G=SC\) and, since this holds for all the elements in \(\mathcal{K}\), we easily deduce that \(G=(S_1\times S_2\times\cdots\times S_k)\times U\) where \(U=\bigcap_{i=1}^k\cent G{S_i}\). Finally, observe that either \(U\) is trivial, or it is a group with trivial Fitting subgroup and no simple characteristic subgroups. Therefore, by Corollary~\ref{key2}, the desired conclusion follows.

\medskip
We observed in {\bf{(iv)}} that, for \(S\in\mathcal{K}\), there is an element of \(\mathcal{K}\) lying in \(\cent G S\), thus \(|\mathcal{K}|\neq 1\). We work next to exclude also \(|\mathcal{K}|>1\), thus obtaining a contradiction.

Note that, if \(S_i\) is in \(\mathcal{K}\), then the two characters \(\chi^{(i)}_1\), \(\chi^{(i)}_2\in\irr{S_i}\) given by Proposition~\ref{HTV} ``cover" together all the prime divisors of \(|S_i|\) except possibly the associated prime \(p_i\) of \(S_i\), as explained in the second paragraph of this proof (recall that, by {\bf{(ii)}} and {\bf{(v)}}, we have \(p_i\not\in\{2,3\}\)); in particular, taking into account that \(\pi(\chi^{(i)}_1)-\pi(\cent G{S_i})\) and \(\pi(\chi^{(i)}_2)-\pi(\cent G{S_i})\) are non-empty by {\bf{(i)}}, we see that one among \(\chi^{(i)}_1\), \(\chi^{(i)}_2\) has a degree divisible by \(2r_i\), and another (possibly the same) has a degree divisible by \(3s_i\), where \(r_i\) and \(s_i\) are (possibly equal) primes not dividing \(|\cent G{S_i}|\).
Note that hence, clearly, $\{r_i, s_i\} \cap \{r_j, s_j\} = \emptyset$ for $i \neq j$.
Given that, assume \(|\mathcal{K}|\geq 4\) and take four distinct elements \(S_i\), \(i\in\{1,\ldots,4\}\), in \(\mathcal{K}\): considering irreducible characters \(\xi_i\) of \(S_i\) such that \(\{2,r_1\}\subseteq\pi(\xi_1)\), \(\{3,s_2\}\subseteq\pi(\xi_2)\), \(p_3\in\pi(\xi_3)\) and \(p_4\in\pi(\xi_4)\), we see that \(|\pi(\xi_1\times\cdots\times\xi_4)|\geq 6\), against the assumption \(\sigma(G)\leq 5\). Our conclusion so far is that \(|\mathcal{K}|\leq 3\).

\smallskip
Assume then \(\mathcal{K}=\{S_1,S_2,S_3\}\), and observe that in this case we have \(\sigma(S_i)=3\) for every \(i\in\{1,2,3\}\): in fact, if (say) \(S_1\) has an irreducible character \(\xi_1\) with \(|\pi(\xi_1)|\geq 4\) then, considering \(\xi_i\in\irr{S_i}\) with \(p_i\in\pi(\xi_i)\) for \(i\in\{2,3\}\), by {\bf (iii)} we get \(|\pi(\xi_1\times\xi_2\times\xi_3)|\geq 6\), again a contradiction. 
Furthermore, assume that an element \(S\) of \(\mathcal{K}\) is not a \(2\)-dimensional projective special linear group; then, we claim that there exists an irreducible character \(\xi\) of \(S\) whose degree is divisible by \(p\) and by a prime \(t\not\in\{2,3\}\). Once this will be established, setting \(S_3=S\), we can choose \(\xi_3=\xi\in\irr{S_3}\), and \(\xi_1\in\irr{S_1}\), \(\xi_2\in\irr{S_2}\) as in the paragraph above, getting the contradiction \(|\pi(\xi_1\times \xi_2\times\xi_3)|\geq 6\). So, we work to prove this claim.

According to Theorem~6.3 of \cite{L} and recalling that, by {\bf{(i)}}, \(\pi(S)\) contains at least two primes \(r\), \(s\) other than \(2\), \(3\) and \(p\), the only cases that must be treated are when either \(S\cong{\rm{PSL}}_3(p^f)\) or \(S\cong{\rm{PSU}}_3(p^f)\) for some positive integer \(f\). In the former case, looking at Theorem~3.2 of \cite{W}, our claim is true unless (setting \(q=p^f\)) we have \(\pi((q+1)(q^2+q+1))\subseteq\{2,3\}\); this forces the odd number \(q^2+q+1\) to be a power of \(3\), and the primes \(r\), \(s\) to lie in \(\pi(q-1)\); but \(S\) has an irreducible character \(\psi\) of degree \((q-1)(q^2+q+1)\), which implies \(\pi(\psi)\supseteq\{2,3,r,s\}\) and contradicts \(\sigma(S)=3\). The case \(S\cong{\rm{PSU}}_3(p^f)\) can be treated similarly, referring to \cite[Theorem~3.4]{W}.
This establishes the claim in the above paragraph.

In order to rule out the case \(\mathcal{K}=\{S_1,S_2,S_3\}\), it remains then to treat the situation when \(S_i\) is a \(2\)-dimensional projective special linear group (in characteristic larger than \(3\)) for every \(i\in\{1,2,3\}\). The set of character degrees of \(\PSL q\), where \(q > 5\) is a prime power (note that $\PSL 5 \cong \PSL 4$ does not show up here by {\bf (ii)}), is well known (see Example~\ref{ex}). In our situation, taking into account {\bf{(i)}} and the fact that \(\sigma(S_i)=3\) for all \(i\in\{1,2,3\}\), each \(S_i\) has two irreducible characters \(\chi_1^{(i)}\), \(\chi_2^{(i)}\) such that \(\pi(\chi_1^{(i)})\supseteq\{2,3,r_i\}\) and \(\pi(\chi_2^{(i)})\supseteq\{2,s_i\}\cup T_i\), where \(r_i, s_i\) are primes not lying in \(|\cent G{S_i}|\) and \(T_i\) is either empty or a singleton \(\{t_i\}\). But every \(T_i\) must be in fact empty: if (say) \(t_1\in T_1\), then \(\chi_2^{(1)}\times\chi_1^{(2)}\in\irr{S_1\times S_2}\) has a degree divisible by \(2,3,s_1,t_1,r_2\), and choosing an irreducible character of \(S_3\) whose degree is divisible by \(p_3\) we  get the contradiction \(\sigma(S_1\times S_2\times S_3)\geq 6\).  

Recall that \(G=(S_1\times S_2\times S_3)\times U\), where \(U=\bigcap_{i=1}^3\cent G{S_i}\); but, as \(\sigma(S_1\times S_2\times S_3)=5=\sigma(G)\), clearly the set \(\pi(U)-\pi(S_1\times S_2\times S_3)\) must be empty, and again by minimality we conclude that \(G=S_1\times S_2\times S_3\). At this stage, we see that \(|\pi(G)|\) is \(11\), so it equals \(2\sigma(G)+1\), and this is the contradiction which excludes \(|\mathcal{K}|=3\).

Finally, assume \(\mathcal{K}=\{S_1,S_2\}\) and write \(G=S_1\times S_2\times U\). Denoting by \(\chi_1^{(1)},\chi_2^{(1)} \) the characters of \(S_1\) given by Proposition~\ref{HTV} (as in the second paragraph of this proof) and, similarly, by \(\chi_1^{(2)},\chi_2^{(2)}\) those of \(S_2\), we consider the irreducible characters \(\chi_1=\chi_1^{(1)}\times\chi_1^{(2)}\) and \(\chi_2=\chi_2^{(1)}\times\chi_2^{(2)}\) of \(S_1\times S_2\). Note that, up to swapping \(\chi_1^{(2)}\) and \(\chi_2^{(2)}\), we may assume \(\{2,3\}\subseteq\pi(\chi_1)\). In this setting, we have 
\begin{equation}\label{eq2}
\pi(G)=\pi(U)\cup(\pi(\chi_1)-\pi(U))\cup(\pi(\chi_2)-\pi(U))\cup\{p_1,p_2\},
\end{equation}
and \(\sigma(G)\geq\sigma(U)+|\pi(\chi_i)-\pi(U)|\) for both \(i=1\) and \(i=2\).

Let us consider first the case \(U\neq 1\). Since \(\fit U=1\), any minimal normal subgroup of \(U\) is a direct product of isomorphic non-abelian simple groups, and the number of factors in this product has to be larger than \(1\); in fact, \(U\) being a direct factor of \(G\), a normal subgroup of \(U\) is also normal in \(G\), thus a simple normal subgroup of \(U\) would be an element of \(\mathcal{K}\), clearly a contradiction. As a consequence, we have \(\sigma(U)\geq 2\); on the other hand \(\sigma(U)\leq 3\), as otherwise, taking \(\delta\in\irr U\) such that  \( \pi(\delta) = \sigma(U)\) and  \(\psi\in\irr{S_1\times S_2}\) with \(\pi(\psi)\supseteq\{p_1,p_2\}\), we would get \(|\pi(\psi\times\delta)|\geq 6\).

If \(\sigma(U)=2\), by \({\bf{(vii)}}\) we have \(|\pi(U)|\leq 4\), and therefore Equation~(3) yields that both \(|\pi(\chi_1)-\pi(U)|\) and \(|\pi(\chi_2)-\pi(U)|\) must be \(3\) (otherwise we would get the contradiction \(|\pi(G)|<12\)). In particular there exist primes \(r,s,t\), all lying outside \(\pi(U)\), such that \(\pi(\chi_1)\supseteq\{2,3,r,s,t\}\). Now, as usual, we can produce an irreducible character of \(G\) whose degree is divisible by six primes taking the product of \(\chi_1\) and \(\xi\in\irr U\) with \(\xi(1)\) divisible by an odd prime different from \(3\).

Assume finally \(\sigma(U)=3\), so \(|\pi(U)|\leq 6\). By Equation~(3), we see that \(|\pi(U)|\) must be \(6\) and \(|\pi(\chi_i)-\pi(U)|\) is \(2\) for \(i\in\{1,2\}\). Now, there exist primes \(r,s\), both outside \(\pi(U)\), such that \(\pi(\chi_1)\supseteq\{2,3,r,s\}\); moreover, by the main result of \cite{MT}, we can find an irreducible character of \(U\) whose degree is divisible by two primes in \(\pi(U)-\{2,3\}\) (because this set contains four elements). Taking the product of this character with \(\chi_1\) we reach again the contradiction \(\sigma(G)\geq 6\).

To rule out the case \(|\mathcal{K}|=2\) and finish the proof, it remains to consider the case \(U=1\), so, \(G=S_1\times S_2\). In this situation, \(\cent G{S_1}=S_2\) is (almost-)simple, against what observed in {\bf{(iv)}}; thus also in this last case we reached a contradiction, and our minimal counterexample \(G\) does not exist.
\end{proof}
    
\begin{theorem}\label{morethan5}
Let $G$ be a group with trivial Fitting subgroup. If \(\sigma(G)> 5\), then we have \(|\pi(G)|\leq3\sigma(G)-4\).
\end{theorem}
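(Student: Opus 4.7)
The plan is to induct on $|G|$ and imitate closely the structural setup used in Theorem~\ref{lessthan5}. Let $G$ be a counterexample of minimal order, so $\sigma(G) \geq 6$ and $|\pi(G)| > 3\sigma(G) - 4$. By Corollary~\ref{key2}, $G$ admits a simple characteristic (hence simple normal) subgroup $S$; I would set $C = \cent G S$. If $C = 1$, then $G$ is almost-simple, and Theorem~\ref{TV} gives $|\pi(G)| \leq 2\sigma(G) + 1 \leq 3\sigma(G) - 4$ (the latter since $\sigma(G) \geq 5$), a contradiction. So I may assume that $C$ is a proper subgroup of $G$ with $\fit C = 1$, hence eligible either for Theorem~\ref{lessthan5} (if $\sigma(C) \leq 5$) or for the inductive hypothesis (if $\sigma(C) \geq 6$).

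The core step, following the second paragraph of the proof of Theorem~\ref{lessthan5} verbatim, is to apply Proposition~\ref{HTV} to the almost-simple group $G/C$ (whose socle is isomorphic to $S$), obtaining characters $\chi_1, \chi_2 \in \irr{S}$ covering $\pi(S)$ up to at most one associated prime $p$, together with $\chi_3$ capturing $p$ when it exists. Setting $\pi_0 = \pi(G/C) - \pi(S)$, $\pi_i = \pi(\chi_i) \cup \pi_0$ for $i \in \{1,2\}$, and $\pi_3 \in \{\{p\}, \emptyset\}$, the Clifford-theoretic derivation used there yields unchanged the two key inequalities
\[
|\pi(G)| \leq |\pi(C)| + \sum_{i=1}^{3} |\pi_i - \pi(C)|, \qquad \sigma(G) \geq \sigma(C) + k,
\]
where $k = \max_{i \in \{1,2,3\}} |\pi_i - \pi(C)|$; since $|\pi_3 - \pi(C)| \leq 1$, one has $\sum_i |\pi_i - \pi(C)| \leq 2k + 1$.

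The rest is a clean case split on $\sigma(C)$. If $\sigma(C) \leq 5$, Theorem~\ref{lessthan5} yields $|\pi(C)| \leq 2\sigma(C) + 1$; substituting and using $\sigma(G) \geq \sigma(C) + k$,
\[
|\pi(G)| \leq 2\sigma(C) + 2k + 2 \leq 2\sigma(G) + 2 \leq 3\sigma(G) - 4,
\]
the last step invoking $\sigma(G) \geq 6$. If $\sigma(C) \geq 6$, the inductive hypothesis gives $|\pi(C)| \leq 3\sigma(C) - 4$, so $|\pi(G)| \leq 3\sigma(C) + 2k - 3$; for $k \geq 1$ this is bounded by $3\sigma(G) - 4$ via $\sigma(G) \geq \sigma(C) + k$, whereas for $k = 0$ all the sets $\pi_i - \pi(C)$ are empty, forcing $\pi(G) = \pi(C)$ and the bound follows at once. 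In every branch we contradict the counterexample assumption.

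The only conceivable obstacle is ensuring that the two displayed inequalities transfer from Theorem~\ref{lessthan5} unchanged; but their derivation depends on nothing beyond Proposition~\ref{HTV} and Clifford theory applied to the pair $(S, C)$ and never uses $\sigma(G) \leq 5$, so they carry over verbatim. In that sense, once the structural framework of Theorem~\ref{lessthan5} is in place, the looser target $3\sigma(G) - 4$ leaves enough slack to absorb the crude counts, and the delicate componentwise analysis (the claims (i)--(vii) of the previous proof) is no longer needed here.
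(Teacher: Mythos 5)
Your proposal is correct and follows essentially the same route as the paper: minimal counterexample, a simple normal subgroup $S$ with $C=\cent GS$, the two inequalities from Proposition~\ref{HTV} and Clifford theory, and a case split on whether $\sigma(C)\leq 5$ (using Theorem~\ref{lessthan5}) or $\sigma(C)\geq 6$ (using minimality). The only differences are cosmetic arithmetic choices — you use the bound $\sum_i|\pi_i-\pi(C)|\leq 2k+1$ throughout, which forces a separate (and correctly handled) treatment of $k=0$ in the second case, where the paper instead uses the cruder bound $3k$ — and your observation that the two key inequalities never rely on $\sigma(G)\leq 5$ is exactly the point the paper makes by referring back to its earlier proof.
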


\begin{proof}
As in the proof of Theorem~\ref{lessthan5}, we assume the existence of a counterexample to the statement, and we take $G$ of minimal order among these counterexamples. By Corollary~\ref{key2} and Theorem~\ref{TV} (taking also into account that \(2\sigma(G)+1\) is smaller than \(3\sigma(G)-4\) whenever \(\sigma(G)>5\)), the set of simple normal subgroups of \(G\) is non-empty and \(G\) is not an almost-simple group. So, let us consider a simple normal subgroup \(S\) of \(G\), and set \(C=\cent G S\) (note that \(C\neq 1\), because \(G\) is not almost-simple). We see that \(C\) has a trivial Fitting subgroup and clearly it is a proper subgroup of \(G\).

Recall the second paragraph in the proof of Theorem~\ref{lessthan5}: 
an application of Proposition~\ref{HTV} to the almost-simple group \(G/C\) yields that
there exist \(\chi_1\) and \(\chi_2\) in \(\irr {S}\) such that $\pi(S)=\pi(\chi_1) \cup \pi(\chi_2)$ unless \(S\) has an associated prime \(p\), in which case we have  $\pi(S)-(\pi(\chi_1) \cup \pi(\chi_2))\subseteq\{p\}$. Defining \(\pi_0=\pi(G/C)-\pi(S)\), $\pi_1 = \pi(\chi_1) \cup \pi_0$, $\pi_2 = \pi(\chi_2) \cup \pi_0$, and $\pi_3 =\{p\}$ or \(\pi_3=\emptyset\) according to whether \(S\) has an associated prime or not, we have
$\pi(G) = \pi(C) \cup \bigcup_{i=1}^3 (\pi_i - \pi(C))$, and therefore
\[  |\pi(G)| \leq |\pi(C)| + \sum_{i=1}^3 |\pi_i - \pi(C)|\leq|\pi(C)|+3\max_{i\in\{1,2,3\}}\{|\pi_i - \pi(C)|\}.
\]
(Note that \(|\pi_3-\pi(C)|\leq 1\).) Moreover, as seen in the previous theorem,
\[
\sigma(G) \geq  \sigma(C) + m,
\]
where \(m=\max_{i\in\{1,2,3\}}\{|\pi_i - \pi(C)|\}\).

Now, if $\sigma(C)>5$, then \(C\) satisfies our assumptions and, by minimality, we get $|\pi(C)|\leq 3\sigma(C)-4$. Thus \(|\pi(G)|\leq(3\sigma(C)-4)+3m=3(\sigma(C)+ m)-4\leq3\sigma(G)-4\), and \(G\) is not a counterexample.
   
Therefore we have $\sigma(C)\leq 5$. In this case, by Theorem~\ref{lessthan5}, we have  $|\pi(C)|\leq 2\sigma(C)+1$, hence $|\pi(G)|\leq (2\sigma(C)+1)+2 m+1\leq 2(\sigma(C)+ m)+2$. If $\sigma(C)+ m\leq 5$, then we also get \(\sigma(C)+ m\leq\sigma(G)-1\), whence \(|\pi(G)|\leq 2\sigma(G)\), which is impossible. On the other hand, \(\sigma(C)+ m\geq 6\) implies  \(2(\sigma(C)+ m)+2\leq 3(\sigma(C)+ m)-4\leq3\sigma(G)-4\), the final contradiction that completes the proof.
\end{proof}

\section{The solvable case: a proof of Theorem~B}

We begin now our discussion concerning the \(\rho\)-\(\sigma\) conjecture for solvable groups. After two general preliminary statements, we will prove a proposition on solvable permutation groups that will be a key step.

\begin{lemma}[\mbox{\cite[Lemma 3]{CD}}]\label{cd}
  Let $K \leq L \leq H$ be groups with $K \nor H$. Then, for every $S \leq H$, we have 
$$\pi(H:S) -\pi(H/K) = \pi(L:L\cap S) - \pi(H/K)\; .$$
\end{lemma}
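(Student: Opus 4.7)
The plan is to factor both indices $|H:S|$ and $|L:L\cap S|$ as products of two parts: one part whose set of prime divisors is absorbed by $\pi(H/K)$, and a second, common part coming from $|K:K\cap S|$. This will force both sides of the claimed equality to collapse to the same set, namely $\pi(K:K\cap S) - \pi(H/K)$.

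First, since $K \nor H$, the product $SK$ is a subgroup of $H$ with $S \leq SK$. The factorization $|H:S| = |H:SK| \cdot |SK:S|$ gives $|SK:S| = |K:K\cap S|$ on the right factor, while the left factor $|H:SK|$ divides $|H:K| = |H/K|$ (because $K \leq SK$). Hence $\pi(H:SK) \subseteq \pi(H/K)$, and consequently
\[
\pi(H:S) - \pi(H/K) = \pi(K:K\cap S) - \pi(H/K).
\]

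Next, I will carry out the analogous decomposition inside $L$. The key small computation is the identity $L \cap SK = K(L\cap S)$, which follows because $K \leq L$: one inclusion is immediate, and for the other, any element $x = sk \in L \cap SK$ with $s\in S$, $k\in K$ satisfies $s = xk^{-1}\in L$, so $s\in L\cap S$. From this identity,
\[
|L\cap SK : L\cap S| = |K(L\cap S):L\cap S| = |K : K \cap S|,
\]
and therefore $|L:L\cap S| = |L:L\cap SK|\cdot|K:K\cap S|$. Since $K \leq L\cap SK$, the factor $|L:L\cap SK|$ divides $|L:K|$, which in turn divides $|H/K|$; thus $\pi(L:L\cap SK)\sbs\pi(H/K)$, yielding
\[
\pi(L:L\cap S) - \pi(H/K) = \pi(K:K\cap S) - \pi(H/K).
\]

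Combining the two displayed equalities gives the desired statement. The only mildly delicate step is the identity $L \cap SK = K(L\cap S)$, but this is a routine consequence of $K \leq L$; everything else is bookkeeping on indices and the fact that $K \nor H$ guarantees $SK$ is a subgroup. No deeper obstacle is expected, which is consistent with the lemma being a short, purely arithmetic observation cited from \cite{CD}.
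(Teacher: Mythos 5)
Your proof is correct: the reduction of both sides to $\pi(K:K\cap S)-\pi(H/K)$ via the factorizations $|H:S|=|H:SK|\,|K:K\cap S|$ and $|L:L\cap S|=|L:L\cap SK|\,|K:K\cap S|$ (using $L\cap SK=(L\cap S)K$, which indeed needs only $K\leq L$ and $K\nor H$) is sound, and every index identity you invoke checks out. The paper itself states this lemma with a citation and no proof, and your argument is the standard one such a citation presupposes, so there is nothing to contrast.
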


\begin{lemma}\label{mn}
  Let $K$ be a group such that $K/\fit K$ is nilpotent. Then  there exists
  $\theta \in \irr K$ such that $\pi(K/\fit K )) \sbs \pi(\theta)$. 
\end{lemma}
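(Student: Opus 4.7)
I plan to argue by induction on $|K|$; the base case $K = \fit K$ is immediate with $\theta = 1_K$. For the inductive step, set $F = \fit K$ and fix an arbitrary prime $p \in \pi(K/F)$. Since $K/F$ is nilpotent, it has a unique normal Hall $p'$-subgroup $R/F$; then $R \nor K$, $K/R$ is a $p$-group, and $\fit R = F$ (as $\fit R$ is characteristic in $R$, hence normal nilpotent in $K$, so $\fit R \leq \fit K = F$, with the reverse inclusion clear). Thus $R/\fit R$ is nilpotent and $\pi(R/\fit R) = \pi(K/F) \setminus \{p\}$, so the inductive hypothesis applied to $R$ yields $\eta \in \irr R$ with $\pi(R/F) \subseteq \pi(\eta)$.

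Next, I seek $\theta \in \irr K$ lying over $\eta$ with $p \mid \theta(1)$: together with $\eta(1) \mid \theta(1)$, this will force $\pi(K/F) \subseteq \pi(\theta)$. By Clifford theory, any such $\theta$ satisfies $\theta(1) = [K : I_K(\eta)] \cdot e \cdot \eta(1)$, where $e$ is the degree of a suitable irreducible projective character associated to the $p$-group $I_K(\eta)/R$. If $\eta$ is not $K$-invariant, then $[K:I_K(\eta)]$ is a non-trivial divisor of the $p$-power $[K:R]$ and any $\theta \in \irr{K|\eta}$ works. If $\eta$ is $K$-invariant but either $\eta$ fails to extend to $K$ or $K/R$ is non-abelian, then some choice delivers $e > 1$, still a $p$-power, and we are done.

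The delicate case that remains is: $\eta$ is $K$-invariant, extends to $\hat\eta \in \irr K$, and $K/R$ is abelian. Gallagher's theorem then forces every character above $\eta$ to have degree exactly $\eta(1)$, so we would need $p \mid \eta(1)$ outright. To handle this, I plan to examine the $K$-stable set $\mathcal{E} = \{\eta \in \irr R : \pi(R/F) \subseteq \pi(\eta)\}$. If some member of $\mathcal{E}$ is not $K$-invariant, we reduce to the first sub-case of the previous paragraph. Otherwise every member of $\mathcal{E}$ is $K$-fixed, and I intend to exploit the standard fact $\cent K{F/\frat F} = F$ (valid since $K$, being an extension of two nilpotent groups, is solvable) to construct instead a suitable linear character $\lambda \in \irr{F/\frat F}$ whose stabilizer $I_K(\lambda)$ has index in $K$ divisible by every prime in $\pi(K/F)$; any $\theta \in \irr K$ lying over $\lambda$ then has $\pi(K/F) \subseteq \pi(\theta)$ by Clifford's theorem.

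The main obstacle is precisely this last construction. Each Sylow subgroup $S_p$ of $K/F$ acts non-trivially on $V^* := \irr{F/\frat F}$ (by the faithfulness provided by $\cent K{F/\frat F} = F$), so each fixed subgroup $(V^*)^{S_p}$ is proper; what is required is that $V^* \neq \bigcup_{p \in \pi(K/F)} (V^*)^{S_p}$, which is not automatic from union bounds when $\pi(K/F)$ is large relative to the primes dividing $|F|$. I expect the careful work to lie here, likely via the $K/F$-module decomposition $F/\frat F = \bigoplus_q F_q/\frat{F_q}$ together with the observation that the distinct Sylow subgroups of the nilpotent $K/F$ commute and often act on different summands, so that $\lambda$ can be assembled component by component.
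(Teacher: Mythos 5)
The paper's own ``proof'' of this lemma is a one-line citation of \cite[Proposition 17.3]{MW}, so any self-contained argument is necessarily a different route; the real question is whether yours is complete, and it is not. The gap is exactly the step you yourself flag as ``the main obstacle'': you never prove that $V^*$ is not covered by the fixed-point sets $(V^*)^{S_p}$, and the entire ``delicate case'' (hence the whole proof) funnels into that unproven construction of $\lambda$. Your worry about union bounds suggests you are looking at the wrong structure. The standard resolution is: first pass to $K/\frat K$ (harmless, since $\frat K\leq\fit K$, the set $\pi(K/\fit K)$ is unchanged, and characters inflate), so that by Gasch\"utz's theorem $F=\fit K$ is abelian and is a \emph{faithful completely reducible} $K/F$-module; hence so is its dual $V^*=\irr F$. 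Write $V^*=\bigoplus_i V_i$ with each $V_i$ irreducible. The decisive point — which you do not use — is that each Sylow subgroup $S_p$ of the \emph{nilpotent} group $K/F$ is \emph{normal} in $K/F$, so $\cent{V_i}{S_p}$ is a $K/F$-submodule of the irreducible module $V_i$ and is therefore $0$ or $V_i$; by faithfulness it is $0$ for at least one index $i=i(p)$. Consequently any $\lambda=\prod_i\lambda_i$ with \emph{every} component $\lambda_i$ non-trivial lies outside $(V^*)^{S_p}$ for every $p\in\pi(K/F)$: no counting of proper subgroups is needed. Note also that working with $\frat F$ instead of $\frat K$, as you propose, is problematic: $F/\frat F$ need not be completely reducible as a $K/F$-module when some prime of $F$ divides $|K/F|$, and the argument above genuinely uses complete reducibility.

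Once this construction is in place, the elaborate inductive and Clifford-theoretic apparatus of your first two paragraphs becomes redundant: for such a $\lambda$, every $\theta\in\irr{K|\lambda}$ has degree divisible by $|K:I_K(\lambda)|$, which by the above is divisible by every prime in $\pi(K/\fit K)$, and the lemma follows in one step. (Your case analysis is also internally loose: the alternative ``every member of $\mathcal E$ is $K$-fixed'' is never actually used in the fallback, which is simply a direct proof of the statement.) I would recommend either citing \cite[Proposition 17.3]{MW} as the paper does, or writing out only the short direct argument sketched above.
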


\begin{proof}
  This follows from~\cite[Proposition 17.3]{MW}.
\end{proof}

\begin{proposition}\label{op2}
  Let $H$ be a solvable permutation group on a finite non-empty set $\Omega$.
  Then there exists a non-empty subset $\Delta \subseteq \Omega$ such that
  \begin{enumeratei}
  \item   $\pi(H) \sbs \pi(H:H_{\Delta}) \cup \{ p \}$, for a suitable $p \in \{ 2, 3\}$;
    \item   $\pi(H) =  \pi(H:H_{\Delta})$  if $|H|$ is odd. 
  \end{enumeratei}
\end{proposition}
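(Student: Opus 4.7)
The plan is to proceed by induction on $|H|+|\Omega|$, assuming without loss of generality that $H$ acts faithfully on $\Omega$ (replacing $H$ by its image in $\mathrm{Sym}(\Omega)$ otherwise). The base case $H=1$ is immediate: any non-empty $\Delta\subseteq\Omega$ works for both (a) and (b), since $\pi(H)=\emptyset$.

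First, I would dispose of the intransitive case. Let $\Omega_1,\ldots,\Omega_t$ be the $H$-orbits with $t\ge 2$, and let $K_i$ be the kernel of the action on $\Omega_i$. If some $K_i=1$, the action of $H$ on $\Omega_i$ is already faithful on a strictly smaller set, so induction produces a $\Delta\subseteq\Omega_i\subseteq\Omega$ whose setwise stabilizer in $\Omega$ coincides with that in $\Omega_i$. Otherwise each $H/K_i$ is a proper quotient; applying induction to $H/K_i$ on $\Omega_i$ gives non-empty $\Delta_i\subseteq\Omega_i$ (and, for (a), primes $p_i\in\{2,3\}$). Setting $\Delta=\bigcup_i\Delta_i$, the identity $H_\Delta=\bigcap_i H_{\Delta_i}$ gives $\pi(H:H_\Delta)\supseteq\bigcup_i\pi(H:H_{\Delta_i})$, and faithfulness ($\bigcap_i K_i=1$) gives $\pi(H)=\bigcup_i\pi(H/K_i)$. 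This yields (b) immediately (no exceptional prime appears in the odd-order setting) and yields $\pi(H)\setminus\pi(H:H_\Delta)\subseteq\{2,3\}$ for (a); the delicate point for (a) is to collapse two potentially different exceptional primes across orbits into a single $p$, which should be resolvable by a careful re-choice of the $\Delta_i$'s or by exploiting that otherwise each $H/K_i$ has order coprime to one of $\{2,3\}$.

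For the transitive case, let $N$ be a minimal normal subgroup of $H$; solvability forces $N$ to be elementary abelian of some prime order $q$. If $N$ is intransitive, the $N$-orbits form a non-trivial system of blocks on which $H/N$ acts faithfully, and induction produces a union of blocks that serves as $\Delta$. If $N$ is transitive, then (being abelian and faithful) $N$ acts regularly, so $\Omega$ may be identified with $N$, of size $q^k$, and $H=N\rtimes H_0$ is primitive affine with $H_0\le\mathrm{GL}(N)$ irreducible. I would then apply induction to the faithful action of $H_0$ on $N\setminus\{0\}$ (where $|H_0|+|N|-1<|H|+|\Omega|$) to obtain $\Delta'$ and a prime $p'\in\{2,3\}$ with $\pi(H_0)\subseteq\pi(H_0:(H_0)_{\Delta'})\cup\{p'\}$, and take $\Delta=\{0\}\cup\Delta'$. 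A short calculation (after possibly a small modification of $\Delta'$ to preclude translation-invariance by a nonzero vector) yields $H_\Delta\cap N=1$ and $H_\Delta=(H_0)_{\Delta'}$, so $\pi(H:H_\Delta)=\{q\}\cup\pi(H_0:(H_0)_{\Delta'})$ and hence $\pi(H)\subseteq\pi(H:H_\Delta)\cup\{p'\}$. The main obstacle will be this primitive affine subcase: controlling $H_\Delta\cap N$ when enlarging $\Delta'$ by $\{0\}$ is not automatic, and the alignment of exceptional primes in the intransitive combining step for (a) is a secondary technical point that has to be handled with care.
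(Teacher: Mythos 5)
Your overall induction scheme (intransitive $\to$ transitive imprimitive $\to$ primitive) matches the paper's, but both of the reductions you use in the transitive case have genuine gaps. First, in the imprimitive case you propose to take $\Delta$ to be a \emph{union of blocks} coming from the induced action on the block system. Any such $\Delta$ satisfies $H_\Delta\supseteq K$, where $K$ is the kernel of the action on blocks, so $\pi(H:H_\Delta)\subseteq\pi(H/K)$ and every prime dividing $|K|$ is lost. Concretely, for $H=C_7\wr C_2$ acting on $14$ points, the minimal normal subgroup you would use is a diagonal $C_7$ whose orbits are the two blocks of size $7$; the kernel of the action on the two blocks is the full base group $C_7\times C_7$ (so $H/N$ does not act faithfully on the blocks, contrary to what you assert), and a union of blocks gives $\pi(H:H_\Delta)\subseteq\{2\}$, missing $7$. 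The paper avoids this by making $\Delta$ meet the blocks in \emph{proper} subsets of two \emph{distinct} sizes $\Delta_1,\Delta_2$ placed on two disjoint families of blocks: the distinct sizes force $H_\Delta$ to map into the common stabilizer of the two families (which by a result of Dolfi contains no Sylow subgroup of $H/K$), while Lemma~\ref{cd} transfers the primes of the block stabilizer $L$ (hence of $K$) into $\pi(H:H_\Delta)$.

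Second, the primitive affine case is where the real content lies, and your reduction does not work. The identity $H_\Delta=(H_0)_{\Delta'}$ for $\Delta=\{0\}\cup\Delta'$ fails in general: for $H=A_4$ on $4$ points with $\Delta'$ of size $2$, the stabilizer of the $3$-set $\Delta$ is a point stabilizer $C_3$ not contained in $H_0$, and $\pi(H:H_\Delta)=\{2\}$ rather than the predicted $\{2,3\}$. More fundamentally, no elementary induction on $H_0$ acting on $N\setminus\{0\}$ (an action that is typically intransitive and unrelated in a controlled way to set-stabilizers of subsets of $\Omega$ containing $0$) will produce what is needed here; the paper instead imports Manz--Wolf's Theorem~5.6 on strongly regular orbits of primitive solvable permutation groups, which reduces the primitive case to a trivial stabilizer plus an explicit finite list of exceptions ($|\Omega|\in\{2,3,4,5,7,8,9\}$) handled by hand, and moreover extracts from it \emph{two} subsets of distinct sizes, exactly the extra leverage needed for the imprimitive step. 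Two smaller points: the alignment of the exceptional primes across orbits, which you leave open, is resolved in the paper by observing that whenever the exceptional prime is $3$ the prime $2$ already divides the relevant index (so one may always take $p=3$ if any $p_i=3$), together with part (b) for odd-order constituents; and part (b) itself is not proved by your induction but quoted directly from Manz--Wolf, Corollary~5.7(b).
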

\begin{proof}
  Part (b) is   part (b) of~\cite[Corollary 5.7]{MW}.
  
  In order to prove part (a), assume first that the action of $H$ on $\Omega$ is primitive. In this case we prove that there are two non-empty subsets $\Delta_1, \Delta_2 \sbs \Omega$  \emph{of distinct sizes} such that
  both set-stabilizers  $H_{\Delta_i}$ verify claim (a) in the statement, for the same prime $p$. 
  
  Clearly, if there exists a  subset $\Delta$ of $\Omega$ such that $H_{\Delta} = 1$ and $|\Delta|\neq |\Omega|/2$, then we set $\Delta_1 = \Delta$ and $\Delta_2 = \Omega - \Delta$.
  If no such $\Delta$ exists (in the terminology of \cite{MW}, this amounts to saying that there are no \emph{strongly regular} orbits in the action of \(H\) on the power set of \(\Omega\)), then by~\cite[Theorem 5.6]{MW} we are in one of the following cases.
  \begin{description}
 
 \item[(a)] $ |\Omega| = 2$: then we take any subset of size $1$ as $\Delta_1$ and $\Delta_2 = \Omega$.

\item[(b)] $|\Omega| = 3$, $H \cong S_3$:  take $|\Delta_1| =1 $, $|\Delta_2| = 2$.

\item[(c)] $|\Omega| = 4$, $H \cong A_4$ or $S_4$; take $|\Delta_1| = 1$, $|\Delta_2| = 3 $.

\item[(d)] $|\Omega| = 5$: $H$ is a Frobenius group of order $10$ or $20$; take $|\Delta_1| =3$, $|\Delta_2| = 4 $.

\item[(e)] $|\Omega| = 7$: $H$ is a Frobenius group  of order $42$; take $|\Delta_1| =2 $, $|\Delta_2| =5 $.

\item[(f)] $|\Omega| =8 $: $H \cong A\Gamma(2^3)$; take $|\Delta_1| =2 $, $|\Delta_2| = 6$.
  
\item[(g)] $|\Omega| = 9$ and $H$ is a subgroup of the semidirect product of ${\rm GL}_2(3)$ with its natural module $V$; here we take $|\Delta_1| =1 $, $|\Delta_2| = 8$ (note that $H$ is a
  $\{2,3\}$-group and that $V$ is a regular normal subgroup).
\end{description}

Next, we assume that $H$ is transitive but imprimitive on $\Omega$.
Let $\Omega = \Sigma_1 \cup \Sigma_2 \cup \cdots \cup \Sigma_m$ be an imprimitive decomposition of $\Omega$, with minimal $|\Sigma_i| \neq 1$.   
Let $\Sigma = \Sigma_1$ and write $\Sigma_i = \Sigma^{x_i}$ for suitable elements $x_1= 1, x_2, \ldots, x_m \in H$.
Let $L = H_{\Sigma}$ be the stabilizer of $\Sigma$ in $H$ and let
$K = \bigcap_{i=1}^m L^{x_i}$ be the kernel of the action of $H$ on the set $\overline{\Omega} = \{ \Sigma_1, \ldots, \Sigma_m\}$.
  So $\overline{H} = H/K$ is a solvable (transitive)  permutation group on  $\overline{\Omega}$, and by~\cite[Corollary 3]{D} (or by Proposition~\ref{permutation}) there exist two disjoint subsets $\Gamma_1, \Gamma_2$ of
  $\overline{\Omega}$ such that $\overline{H}_{\Gamma_1} \cap \overline{H}_{\Gamma_2}$ contains no Sylow subgroup of $\overline{H}$.
Let $C$ be the kernel of the action of $L$ on $\Sigma$; so $L/C$ is a \emph{primitive} permutation group on $\Sigma$ (by the minimality of $|\Sigma|$) and there exist two non-empty subsets $\Delta_1, \Delta_2 \sbs \Sigma$
such that $|\Delta_1| \neq |\Delta_2|$ and $ \pi(L:L_{\Delta_i}) \supseteq \pi(L/C) - \{p\}$, $i=1,2$, with $p \in \{ 2, 3 \}$.
We can also assume that $\Sigma_1 \in \Gamma_1$.

Define now
$$\Delta = \bigcup_{\Sigma^{x_i} \in \Gamma_1} \Delta_1^{x_i} \cup \bigcup_{\Sigma^{x_i} \in \Gamma_2} \Delta_2^{x_i} \; .$$
Since $|\Delta_1| \neq |\Delta_2|$, it follows that $H_{\Delta}K/K = \overline{H_{\Delta}} \leq \overline{H}_{\Gamma_1} \cap \overline{H}_{\Gamma_2}$. 
Hence $\pi(H/K) \sbs \pi(H:H_{\Delta})$.
Moreover,   by Lemma~\ref{cd}, $\pi(H:H_{\Delta}) - \pi(H/K) = \pi(L:L_{\Delta}) - \pi(H/K)$.
As $L_{\Delta}$ is a subgroup of  $L_{\Delta_1}$ (since $\Sigma_1 \in \Gamma_1$)  and $\pi(K) \subseteq \pi(L/C)$ (because $K$ is isomorphic to a subgroup of a direct product of groups isomorphic to $L/C$),  we deduce that
$\pi(H)  = \pi(K) \cup \pi(H/K) \sbs  \pi(H:H_{\Delta}) \cup \{p\}$.

Finally, we assume that $H$ is not  transitive on $\Omega$; let $\Omega_1, \Omega_2, \ldots, \Omega_k$, $k \geq 2$, be the orbits of $H$ on $\Omega$ and let
$K_i$ be the kernel of the action of $H$ on $\Omega_i$, for $i = 1, 2, \ldots, k$.
By what we have proved till now, for each $i$ there exists a non-empty $\Delta_i \sbs \Omega_i$  and a prime $p_i \in \{2,3\}$  such that
$\pi(H/K_i) \sbs \pi(H/K_i:(H/K_i)_{\Delta_i}) \cup \{ p_i \} =  \pi(H:H_{\Delta_i}) \cup \{p_i\}$.
By part (b), if $2 \not\in \pi(H/K_i)$, then  we can choose $\Delta_i \sbs \Omega _i$ such that $\pi(H/K_i) = \pi(H:H_{\Delta_i})$.

As $\bigcap_i K_i = 1$, we have $\pi(H) = \bigcup_i \pi(H/K_i)$.
Hence, setting $\Delta = \bigcup_i \Delta_i$, we conclude that $\pi(H) \sbs \pi(H:H_{\Delta}) \cup \{p\}$, where $p = 2$ if $p_i = 2$ for each $i$  and $p=3$ if  $p_i=3$ for some $i$.
\end{proof}

Next, we introduce some terminology. If $H$ is a group and $V$ is an irreducible $H$-module, then it is possible to find a pair \((L,W)\), where \(L\) is a subgroup of \(H\) and \(W\) is a primitive \(L\)-module, such that we have $V = W^H$ (here \(L=\norm H W\)); if $V$ is primitive, then $L = H$. We say that a faithful irreducible \(H\)-module \(V\) is of \emph{type 1} if there exists a pair \((L,W)\) of this kind, such that the normal core \(L_H\) of \(L\) in \(H\) is metabelian.
 
  In the situation described in the previous paragraph, setting  $R = L/\cent LW$, we have $\fit R = E Z$ where $Z = \zent{\fit R}$ is cyclic, $E \nor R$  and  $E$  is a
 direct product of extraspecial groups. Then one sets  $e_W = \sqrt{|E/\zent E|}$ (\cite[Definition 2.1]{Y1}).
 We remark that  $e_W = 1$ if and only if $\fit R$ is cyclic and $R$ acts as a group of semilinear maps on $W$. 
Hence, if $e_W = 1$, then $L_H$ is metabelian (because it embeds in the direct product of  groups  isomorphic to the metabelian group $L/\cent LW$) and \(V\) is of type 1.

\begin{proposition}\label{p3}
  Let $G$ be a solvable group whose Frattini subgroup $\frat G$ is trivial. Then there  exist $\chi, \psi \in \irr G$ and a prime  $p \in \{ 2, 3\}$ such that
  $\rho(G) \sbs  \pi(\chi) \cup \pi(\psi) \cup \{p\}$.
  Furthermore, if $|G/\fit G|$ is odd, then there exist $\chi, \psi \in \irr G$ such that
  $\rho(G) = \pi(\chi) \cup \pi(\psi)$.
\end{proposition}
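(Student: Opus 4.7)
Since $G$ is solvable with $\frat G = 1$, the Fitting subgroup $V = \fit G$ is a direct product of elementary abelian minimal normal subgroups of $G$ and admits a complement $H \cong G/\fit G$ in $G$, so that $G = V \rtimes H$; moreover, solvability together with the abelianness of $V$ forces $\cent G V = V$, so $H$ acts faithfully on $V$ and, by duality, on $\irr V$. Since $V$ is a normal abelian subgroup of $G$, It\^o's theorem gives $\chi(1)\mid |G:V| = |H|$ for every $\chi \in \irr G$, whence $\rho(G) \sbs \pi(H) = \pi(G/\fit G)$. It therefore suffices to cover $\pi(H)$ (up to an exceptional prime in $\{2,3\}$) by the primes dividing the degrees of two characters in $\irr G$; the case $V = 1$ is trivial and may be ignored.

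I would apply Proposition~\ref{op2} to the faithful solvable permutation group $H$ acting on $\Omega = \irr V \setminus \{1_V\}$, obtaining a non-empty $\Delta \sbs \Omega$ and a prime $p \in \{2,3\}$ such that $\pi(H) \sbs \pi(H:H_\Delta) \cup \{p\}$, with no exceptional $p$ needed when $|H|$ is odd. The aim is then to produce two characters $\lambda_1, \lambda_2 \in \irr V$ satisfying $H_{\lambda_1} \cap H_{\lambda_2} \leq H_\Delta$: in this case $|H:H_\Delta|$ divides $|H:H_{\lambda_1}\cap H_{\lambda_2}|$, and the standard index inequality yields
\[
\pi(H:H_\Delta) \sbs \pi(H:H_{\lambda_1}) \cup \pi(H:H_{\lambda_2})\,.
\]
For each $i$, Clifford theory furnishes $\chi_i \in \irr{G \mid \lambda_i}$ whose degree is divisible by $|G:I_G(\lambda_i)| = |H:H_{\lambda_i}|$, so taking $\chi = \chi_1$, $\psi = \chi_2$ gives $\rho(G) \sbs \pi(\chi) \cup \pi(\psi) \cup \{p\}$, and the prime $p$ is dropped when $|H|$ is odd by Proposition~\ref{op2}(b), proving both assertions.

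The main obstacle is the construction of the pair $\lambda_1, \lambda_2$: a single element of $\Delta$ yields only a point-stabilizer, which in general is \emph{not} contained in the set-stabilizer $H_\Delta$, so one cannot simply read off the desired pair from $\Delta$. I would attack this by revisiting the proof of Proposition~\ref{op2} orbit by orbit. On each $H$-orbit of $\Omega$, the transitive-imprimitive reduction used there funnels to the primitive case; for a primitive solvable linear action, the structural analysis of primitive solvable linear groups, together with the type~1 / $e_W$ framework recalled just before the statement, describes the stabilizer precisely enough to exhibit such a pair. When $e_W = 1$ the acting group sits inside a semilinear group over a cyclic Fitting subgroup and an explicit pair can be written down; otherwise Yang's regular-orbit theorem \cite{Y1} supplies a pair whose joint stabilizer is trivial, hence (\emph{a fortiori}) contained in $H_\Delta$. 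Finally, the decomposition $V = V_1 \times \cdots \times V_n$ of $V$ into minimal normal subgroups of $G$ allows these orbit-wise pairs to be pasted into a single pair $(\lambda_1,\lambda_2) \in \irr V \times \irr V$ enjoying the required containment.
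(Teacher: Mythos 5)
Your reduction to the complement $H$ of $V=\fit G$ and the appeal to Proposition~\ref{op2} are sound, but the heart of your argument --- the existence of $\lambda_1,\lambda_2\in\irr V$ with $H_{\lambda_1}\cap H_{\lambda_2}\leq H_{\Delta}$, equivalently with $\pi(H:H_{\lambda_1}\cap H_{\lambda_2})\supseteq\pi(H)\setminus\{p\}$ --- is exactly the hard point, and it is neither proved nor a consequence of the results you invoke. Two concrete problems. First, Yang's theorems do not always supply a regular orbit: as recalled before the statement, either $R=L/\cent LW$ has a regular orbit on $W$ or $e_W$ lies in a small exceptional set, and in the exceptional cases the best available substitute is \cite[Theorem 11.3]{MW}, which produces a \emph{single} vector $\mu$ with $\pi(R:\cent R{\mu})\supseteq\pi(R)-\{2,3\}$; one must then reconcile the prime lost there with the prime lost in Proposition~\ref{op2}, which is a delicate step, not an afterthought. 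Second, and more fundamentally, in the type~1 (semilinear / metabelian-core) case nothing you cite shows that the primes dividing $|K/\fit K|$, where $K=L_H$, divide the index of any joint stabilizer of the kind you construct; a two-vector statement of the strength you need would be a new orbit theorem for solvable linear groups, strictly stronger than what the paper establishes.

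The paper's proof sidesteps this by \emph{not} taking both characters over $\fit G$. It applies Proposition~\ref{op2} to the action of $H/K$ on the set of blocks $\{W^{x_1},\ldots,W^{x_m}\}$ (not on $\irr V\setminus\{1_V\}$), and builds one character $\lambda$ of $V$ whose support is $\Delta$, so that $I_H(\lambda)K/K\leq \overline{H}_{\Delta}$, while the part of $I_H(\lambda)$ inside $K$ is controlled by the fixed-point-free action of $\fit K$ (type 1) or by Yang's theorems together with \cite[Theorem 11.3]{MW} (otherwise). The residual primes, namely those of the nilpotent group $K/\fit K$, are then covered by a second character $\psi\in\irr{G/F}$ lying over some $\theta\in\irr K$ with $\pi(K/\fit K)\sbs\pi(\theta)$ (Lemma~\ref{mn}); this $\psi$ is inflated from $H$ and is not induced from a character of $V$. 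Without something playing the role of this second, non-induced character, your construction cannot be completed as stated.
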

\begin{proof}
  As $\frat G$ is trivial, $F = \fit G$ is a direct product of minimal normal subgroups of $G$ and it has a complement $H$ in $G$ (see \cite[III.4.4 and III.4.5]{H}).
  Write $F = M_1 \times \cdots \times M_n$, where the $M_i$ are minimal normal subgroups of $G$, and let $V_i$ be the dual group $\widehat{M_i}$ for \(i\in\{1,\ldots,n\}\).
  So $V_i$ is an irreducible $H$-module, and
  $V=\widehat{F} = V_1 \times \cdots \times V_n$ is a completely reducible and faithful $H$-module. We will work by induction on $|G|$.

  \medskip
 We start by assuming that \(V=V_1\) is an irreducible \(H\)-module (of course faithful), and we write $V = W^H$, where $W$ is a primitive $L$-module for $L = \norm HW$.
  Let $K = L_H$ be the normal core of $L$ in $H$; then $\o H = H/K$ is a transitive permutation group on the set $\Omega = \{ W^{x_1}, \ldots, W^{x_m}\}$, where
  $\{ x_1 = 1, \ldots, x_m\}$ is a right transversal of $L$ in $H$.
  By Proposition~\ref{op2}, there exists a non-empty subset $\Delta$ of $\Omega$ and a prime  $p \in \{ 2, 3\}$ such that $\pi(\o H : \o{H}_{\Delta}) \supseteq \pi(\o H) - \{p\}$.
  Moreover,  we can assume that either $\pi(\o H : \o{H}_{\Delta})  = \pi(\o H)$  or $2\in \pi(\o{H})$, and there is no loss of generality in assuming  $W = W_1 \in \Delta$. 
  
  Now, take any set \(\{\mu_1,\ldots,\mu_m\}\) where \(\mu_i\) lies in \(W^{x_i}\) for \(i\in\{1,\ldots,m\}\) and \(\mu_i\) is non-trivial if and only if \(W^{x_i}\in\Delta\). Define then $\lambda = \mu_1 \times \cdots \times \mu_m \in V$. Taking into account the imprimitivity decomposition of $V$ and the choice of the $\mu_i$, setting  $I = \cent H{\lambda}$ we see that $\o I = IK/K$ is contained in the stabilizer ${\o H}_{\Delta}$, whence $\pi(H:I) \supseteq \pi(H/K) -\{p\}$.

  \smallskip
Consider first the case when $V$ is an \(H\)-module of type 1, so (for a suitable choice of the pair $(L, W)$) $K$ is metabelian, and let \(\lambda\in V\) be as above. Since $\lambda$ is not the trivial character of \(F\) (because \(\Delta\neq\emptyset\)), we claim that $\pi(H:I) \supseteq \pi(\fit{K})$ as well. In fact, if $q \in \pi(\fit K)$ and $Q$ is a  Sylow $q$-subgroup of $\fit K$, then $1 \neq Q \nor H$ and hence (by Clifford Theorem) $V_Q$ has no trivial irreducible constituent; in particular, \(\langle \lambda\rangle\) can not be a trivial \(Q\)-module, hence \(Q\) does not lie in \(I\). Thus, 
  for any $\chi \in \irr{G|\lambda}$, we deduce that  $\pi(\chi) \supseteq (\pi(H/K) \cup \pi(\fit K) ) - \{ p \}$.
  Moreover, by Lemma~\ref{mn}  there exists  $\theta \in \irr K$ such that $\pi(K/\fit K )) \sbs \pi(\theta)$. So, taking a character $\psi \in \irr{H|\theta}$ and viewing $\psi$ as
  an irreducible character of $G$ by inflation, we get
  $\rho(G) = \pi(H) \sbs  \pi(\chi) \cup \pi(\psi) \cup \{p\}$, where $p \in \{ 2, 3\}$ and
     $\rho(G) = \pi(H) =  \pi(\chi) \cup \pi(\psi)$ if $|H|$ is odd.   

   \smallskip
   Assume next that $V$ is irreducible and not of type 1 as an \(H\)-module, thus $e = e_W \geq 2$.
   Here we aim to prove a stronger property: there
  exists $\lambda \in V$ such that,  for all $\chi \in \irr{G|\lambda}$,
  we have $\rho(G) = \pi(\chi)  \cup \{p\}$ where $p \in \{ 2, 3\}$.

   By Theorem 3.1 of~\cite{Y1} and Theorem 3.1 of~\cite{Y2}, either $R=L/\cent L W$ has a regular orbit on $W$ or $e$ belongs to the set $\{1, 2, 3, 4, 8, 9, 16\}$. In the latter case, we see that the center $\zent E$ of $E$ is a non-trivial $q$-group where $q \in \{2,3\}$, and $\zent E$ acts fixed-point freely on $W$.
   Moreover, by Theorem 11.3 of~\cite{MW} there exists a (non-trivial) $\mu \in W$ such that  $\pi(R:\cent R{\mu}) \supseteq \pi(R) - \{2,3\}$. As $\zent E$ does not fix $\mu$, we
   conclude that $\pi(R:\cent R{\mu}) \supseteq \pi(R) - \{p_0\}$ for a suitable  $p_0 \in  \{ 2,3 \}$.
  In fact we remark that, if $\pi(R:\cent R{\mu})$ does not contain $\pi(R) - \{ 2 \}$, then \(3\) does not divide \(|\zent E|\) and so $\fit R$ is a central product of an extraspecial $2$-group and a cyclic group. 
   Finally, if  $2 \not\in \pi(R)$ then by~\cite[Theorem 2.2]{D1}  we can assume that $\pi(R:\cent R{\mu}) = \pi(R)$. Note that the last equality clearly holds also if \(R\) does have a regular orbit on \(W\), provided \(\mu\) is chosen in such an orbit.
   
   Define $\lambda = \nu_1 \times \cdots \times \nu_m \in V$, where  $\nu_i = \mu^{x_i}$ if $W^{x_i} \in \Delta$ and $\nu_i $ is trivial otherwise. Set also $I = \cent H{\lambda}$, as above.
   Then we already know that $\pi(H:I)\supseteq \pi(H/K)- \{p\}$. 
   On the other hand, since  $L \cap I = \cent L{\lambda} \leq \cent L{\mu}$, and since we have $\pi(R)\supseteq \pi(K)$ because \(K\) embeds in the direct product of groups isomorphic to \(R\), Lemma~\ref{cd} yields
   $$\pi(H:I) - \pi(H/K) = \pi(L: L \cap I) - \pi(H/K) \supseteq \pi(R) - \left(\pi(H/K) \cup \{p_0\}\right) \supseteq \pi(K) - \left( \pi(H/K) \cup  \{p_0\}\right) .$$  
Let now \(\chi\) be in \(\irr{G|\lambda}\), and observe first that if \(p_0=p\), then we clearly have \(\rho(G)\subseteq\pi(\chi)\cup\{p_0\}\). Assume then \(p_0=2\) and \(p=3\). If \(\Delta\) can be chosen so that $\pi(H:I)\supseteq \pi(H/K)$, then we get \(\rho(G)\subseteq\pi(\chi)\cup\{2\}\); on the other hand, if such a \(\Delta\) does not exist, then \(2\) lies in \(\pi(H/K)\) and we have \(\rho(G)\subseteq \pi(\chi)\cup\{3\}\). 
The only case that still needs to be treated is when $3$ divides \(|K|\) but not \(|R:\cent R{\mu}|\) (so, \(p_0=3\) and \(p=2\)). In this setting, \(\pi(R:\cent R{\mu})\) does not contain \(\pi(R)-\{2\}\), and therefore (as observed in the paragraph above) the \(2\)-complement of \(\fit R\) is cyclic; we claim that such a situation forces \(2\) to lie in \(\pi(K)\). In fact, assuming the contrary and setting \(C=\cent L W\), we see that \(\fit{KC/C}\) is contained  in the \(2\)-complement of \(\fit R\) and it is therefore cyclic. It follows that the factor group of \(KC/C\) over its Fitting subgroup (which embeds in the automorphism group of a cyclic group of odd order) is cyclic as well, so that \(KC/C\) is metabelian. But this contradicts the fact that \(V\) is not of type 1, because \(K\) now embeds in the direct product of groups isomorphic to \(KC/C\) and it is then metabelian as well. This contradiction yields \(2\in\pi(H:I)\), hence we have \(\rho(G)=\pi(\chi)\cup\{3\}\) for any \(\chi\in\irr{G|\lambda}\).
   
 Finally, if $|H|$ is odd, then  $\pi(\o H : \o{H}_{\Delta}) = \pi(\o H)$ and  $\pi(R:\cent R{\mu}) = \pi(R)$, so we get $\rho(G)  =  \pi(\chi)$ for any \(\chi\in\irr{G|\lambda}\). 
   
   \medskip
It remains to consider the case when \(V\) is not irreducible, so, we assume $n \geq 2$.
Suppose first that one of the modules $V_i$, say $V_1$, is not of type 1 as an \(H/\cent H{V_1}\)-module. Let  $L$ be a complement of $V_1$ in $G$ and $C = \cent L{V_1} \nor G$. Write $U = \fit C = \fit G \cap C$. Applying to the \(L/C\)-module \(V_1\) (not of type 1) the discussion carried out above in the irreducible case,  
   we can find $\lambda_0 \in V_1$ such that,  for all $\theta \in \irr{G|\lambda_0 \times 1_C}$, we have $\pi(L/C) \sbs \pi(\theta)  \cup \{p_0\}$
   for a suitable $p_0 \in \{ 2, 3\}$. Furthermore, if $|L/C|$ is odd, then $\pi(L/C) = \pi(\theta)$.

   Since $\frat C = 1$, by inductive hypothesis there exist $\alpha, \beta \in \irr C$ such that $\pi(C/U) = \rho(C) \sbs  \pi(\alpha) \cup \pi(\beta) \cup \{p_1\}$,
   for a suitable $p_1 \in \{ 2, 3\}$; and $\pi(C/U) = \pi(\alpha) \cup \pi(\beta)$ if $|C/U|$ is odd. 
   Let now  $\chi \in \irr{G|\lambda_0 \times \alpha }$  and  $\psi \in \irr{G|\lambda_0 \times \beta }$. 
Observe that  $\rho(G) = \pi(L/C) \cup \pi(C/U)$, that if $\pi(L/C)-\pi(\theta)=\{3\}$ then $2 \in \pi(L/C)$ and if $\pi(C/U)-(\pi(\alpha)\cup\pi(\beta))=\{3\}$ then $2 \in \pi(C/U)$. Hence, setting $ p = \max \{p_0, p_1\}$, we 
have $\rho(G) \sbs \pi(\chi) \cup \pi(\psi) \cup \{ p \}$ with $p \in \{2,3 \}$. Moreover, if $|G/\fit G| = |L/U|$ is odd, then 
$\rho(G) \sbs \pi(\chi) \cup \pi(\psi)$.

   Therefore, we can assume that each of the \(V_i\) is an irreducible \(H/\cent H{V_i}\)-module of type 1. Write $V_i = (W_i)^H$, where $W_i$ is a primitive $L_i$-module for a subgroup $L_i$ of $H$ and, setting $K_i = (L_i)_H$, the factor group \(K_i/\cent H{V_i}\) is metabelian. Let $K = \bigcap_i K_i$. Then $\o H = H/K$ is a permutation group  on the set $\Omega = \bigcup_i \Omega_i$, where
   $\Omega_i$ is the set consisting of the conjugates of the module $W_i$ by the action of $H$ (so the sets $\Omega_i$ are the orbits of $\o H$ on $\Omega$).
   By Proposition~\ref{op2}, we can choose a (non-empty) subset $\Delta \sbs \Omega$,  such that 
   $\pi(\o H) \sbs \pi(\o H :\o{H}_{\Delta}) \cup \{ p \}$, where $p \in \{ 2, 3\}$, and
     $\pi(\o H) \sbs \pi(\o H :\o{H}_{\Delta})$, if $|\o H|$ is odd. We can also clearly assume that $\Delta$ a has non-empty intersection with every orbit $\Omega_i$.  

  For every $W \in \Omega$, we now choose a  $\mu_W \in W$ such that $\mu_W \neq 1_W$ if and only if $W \in \Delta$.
  We define $\lambda = \prod_{W \in \Omega}\mu_W \in V$ and $I = \cent H{\lambda}$.  
  Arguing as in the irreducible case of type~1,
  we conclude that, for every $\chi \in \irr{G|\lambda}$,   $\pi(\chi) \supseteq (\pi(H/K) \cup \pi(\fit K) ) - \{ p \}$, and
  $\pi(\chi) \supseteq (\pi(H/K) \cup \pi(\fit K) )$ if $|H/K|$ is odd.
  Since $K$ is metabelian (as every group $K_i/\cent H{V_i}$ is metabelian and $\bigcap_i \cent H{V_i} = 1$), by Lemma~\ref{mn}  there exists  $\theta \in \irr K$ such that $\pi(K/\fit K )) \sbs \pi(\theta)$. So, taking a character $\psi \in \irr{H|\theta}$ and viewing $\psi$ as
  an irreducible character of $G$ by inflation, we get that
   $\rho(G) = \pi(H) \sbs  \pi(\chi) \cup \pi(\psi) \cup \{p\}$, where $p \in \{ 2, 3\}$, and that $\rho(G) = \pi(\chi) \cup \pi(\psi)$ if $|H|$ is odd.  
 \end{proof}

We are ready to prove a result that will imply Theorem~B.

 \begin{theorem}\label{KRS}
   If \(G\) is a  solvable group, then there exist
   $\beta_1, \beta_2, \beta_3 \in\irr G$ such that
   \[|\rho(G)| \leq|\pi(\beta_1)|+|\pi(\beta_2)|+|\pi(\beta_3)|\]
   and
   \[\rho(G)=\pi(\beta_1)\cup\pi(\beta_2)\cup\pi(\beta_3)\cup\{p\}\] for a suitable \(p\in\{2,3\}\).
 
 \end{theorem}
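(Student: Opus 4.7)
My plan is to argue by induction on $|G|$, reducing the general case to the Frattini-trivial case handled by Proposition~\ref{p3}. In the base case $\frat G=1$, that proposition provides $\chi_1,\chi_2\in\irr G$ and $p\in\{2,3\}$ with $\rho(G)\subseteq\pi(\chi_1)\cup\pi(\chi_2)\cup\{p\}$; I then take $\beta_1=\chi_1$, $\beta_2=\chi_2$, and let $\beta_3\in\irr G$ be a character whose degree is divisible by $p$ (which exists precisely when $p\in\rho(G)$), or the trivial character otherwise. Both the inclusion and the inequality follow immediately from this choice.

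The substantive case is $\frat G\neq 1$, where I select a minimal normal subgroup $N\leq\frat G$; since $G$ is solvable, $N$ is elementary abelian of exponent $q$ for some prime $q$. The central structural step is the claim
\[
\rho(G)\setminus\rho(G/N)\subseteq\{q\}.
\]
To prove this, suppose for contradiction that $r\in\rho(G)\setminus\rho(G/N)$ with $r\neq q$. By the It\^o--Michler theorem the Sylow $r$-subgroup $\overline P$ of $G/N$ is abelian and normal, and since $\gcd(r,|N|)=1$ the Sylow $r$-subgroup $P$ of $G$ projects isomorphically onto $\overline P$ and satisfies $PN\nor G$. The Frattini argument then yields $G=PN\cdot N_G(P)=N\cdot N_G(P)$, and since $N\leq\frat G$ consists of non-generators, $G=N_G(P)$; hence $P$ is abelian and normal in $G$, which by It\^o--Michler forces $r\notin\rho(G)$, a contradiction. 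The parallel argument applied in the case $r=q$ does not produce a contradiction but instead shows that whenever $q\in\rho(G)\setminus\rho(G/N)$, the Sylow $q$-subgroup $Q$ of $G$ is non-abelian and normal, so there exists $\chi\in\irr G$ above a non-linear character of $Q$, guaranteeing $q\mid\chi(1)$.

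Granting the claim, I apply the inductive hypothesis to $G/N$ to obtain $\overline\beta_1,\overline\beta_2,\overline\beta_3\in\irr{G/N}$ and $p'\in\{2,3\}$, and inflate to $\beta_1,\beta_2,\beta_3\in\irr G$. If $\rho(G)=\rho(G/N)$, taking $p=p'$ completes the proof. Otherwise $\rho(G)=\rho(G/N)\cup\{q\}$ and I must incorporate $q$: if $q\in\{2,3\}$ I set $p=q$, absorbing the extra prime into $\{p\}$; if $q\geq 5$ I replace one of the inflated characters by a character of $G$ chosen over $Q$ so that $q\in\pi(\beta_i)$ for some $i$.

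The main obstacle I anticipate is this final case analysis. Specifically, when $q\geq 5$ and simultaneously the ``extra'' prime $p'$ from the induction is not contained in $\pi(\overline\beta_i)$ for any $i$, the single replacement character must be chosen so as to carry $q$ and also preserve enough of $\pi(\overline\beta_3)$ for the numerical inequality $|\rho(G)|\leq\sum_i|\pi(\beta_i)|$ to survive. I expect the resolution to rely on the normality of $Q$: characters of $G$ lying above non-linear characters of $Q$ are plentiful, and one can select one that simultaneously lies above characters of other normal subgroups of $G$ supplying the primes previously carried by $\overline\beta_3$, thereby producing a single character of $G$ with the combined support needed to close the argument.
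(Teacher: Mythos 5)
Your reduction claim is correct and nicely proved: for a minimal normal $q$-subgroup $N\leq\frat G$, the Frattini argument plus It\^o--Michler indeed gives $\rho(G)\setminus\rho(G/N)\subseteq\{q\}$, and when $q$ is new the Sylow $q$-subgroup $Q$ of $G$ is normal and non-abelian. But the proof has a genuine gap exactly at the point you flag as an ``obstacle'', and it is not a technicality that resolves itself. When the new prime $q$ must be carried by a character (i.e.\ $q\geq 5$, but also when $q\in\{2,3\}$ and the inductive prime $p'$ differs from $q$ and is not already in $\bigcup_i\pi(\overline\beta_i)$ --- a case your ``set $p=q$'' step silently loses, since then both $q$ and $p'$ would need absorbing and only one slot $\{p\}$ is available), you need a single $\beta_3\in\irr G$ with $\pi(\beta_3)\supseteq\pi(\overline\beta_3)\cup\{q\}$, both to preserve the covering equality and to gain the $+1$ in the inequality $|\rho(G)|\leq\sum_i|\pi(\beta_i)|$. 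Your proposed fix --- choose $\beta_3$ over a nonlinear character of $Q$ and ``simultaneously above characters of other normal subgroups supplying the primes previously carried by $\overline\beta_3$'' --- does not work as stated: the primes in $\pi(\overline\beta_3)$ need not come from characters of normal subgroups at all (they typically arise from indices of inertia groups), so there is no normal subgroup whose character you can pair with the one on $Q$ to recover them. Iterating the induction only compounds the problem, since each new prime would require its own such merge.

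The paper avoids this entirely by not inducting prime by prime. It applies Proposition~\ref{p3} once to $G/\frat G$ to get $\beta_1,\beta_2$ with $\rho(G/\frat G)\subseteq\pi_1\cup\pi_2\cup\{p\}$, and collects \emph{all} the extra primes at once: $\rho(G)=\nu\cup\rho(G/\frat G)$ where $\nu$ is the set of primes with a normal non-abelian Sylow subgroup. The normal Hall $\nu$-subgroup $N$ is nilpotent, so a single $\varphi\in\irr N$ has $\pi(\varphi)=\nu$, and one argues by counting: if the sets $\pi_1,\pi_2,\{p\},\nu$ are not pairwise disjoint the overlap already absorbs the $+1$; if they are disjoint, either some $\beta_3\in\irr{G|\varphi}$ satisfies $\pi(\beta_3)\supsetneq\nu$, in which case the extra prime of $\beta_3$ necessarily lands in $\pi_1\cup\pi_2\cup\{p\}$ and pays for $p$, or else Clifford/Gallagher forces $G/N$ abelian and Lemma~\ref{mn} finishes. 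Some version of this global counting (rather than a character-replacement merge) is what your argument is missing.
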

 \begin{proof}
   Set $\o G = G/\frat G$; by Proposition~\ref{p3} there exist (by inflation) $\beta_1, \beta_2 \in \irr G$     and a suitable $p \in \{ 2, 3\}$ such that,
   setting $\pi_i = \pi(\beta_i)$ for $i=1,2$, we have $\pi(G/\fit G) = \rho(\o G) \sbs \pi_1 \cup \pi_2 \cup \{p\}$. Define also \(\nu\) as the set of primes in \(\pi(G)\) for which \(G\) has a normal non-abelian Sylow subgroup,
    so that $\rho(G) = \nu \cup \rho(\o G)$, and let $N$ be a Hall $\nu$-subgroup of $G$. Now,
   $N$ is a normal subgroup of $G$ and there exists $\varphi \in \irr N$ such that $\pi(\varphi) = \nu$.
   Clearly $|\pi_1|, |\pi_2|, |\nu| \leq \sigma(G)$  and 
   $$\rho(G) = \nu \cup \rho(\o G)  \sbs \nu \cup \pi_1 \cup \pi_2  \cup \{p\} \; .$$
   
   If $|\pi_1 \cap \pi_2| \geq 1$, then $|\pi_1 \cup \pi_2 \cup \{ p\}| \leq  |\pi_1 \cup \pi_2| +1 \leq |\pi_1| + |\pi_2|$ and hence
   $$|\rho(G)|\leq |\nu| + |\pi_1| + |\pi_2|  \leq 3 \max \{|\nu|,  |\pi_1|, |\pi_2|\} \leq 3 \sigma(G) \; .$$
   Hence we can assume that $\pi_1$ and $\pi_2$ are disjoint sets. By a similar argument,  we can assume   $\nu  \cap (\pi_1 \cup \pi_2 \cup \{p\})   = \emptyset$. 
   
Assume now that,  for every $\chi \in \irr{G|\varphi}$ (so, $\nu \sbs \pi(\chi)$), we have $\pi(\chi) = \nu$.
Since $(|N|, |G/N|) = 1$, we have that $\varphi$ is $G$-invariant (by Clifford correspondence) and  there exists an extension
   $\theta \in \irr G$ of $\varphi$. 
   Thus Gallagher  theorem yields that $G/N$ is abelian and result easily follows  by Lemma~\ref{mn}. 

   Therefore we can assume that there exists $\beta_3 \in \irr{G|\varphi}$  such that $\nu$ is a proper subset of  $\pi(\beta_3)$.
   Hence, $\pi(\beta_3) \cap (\pi_1 \cup \pi_2 \cup \{ p\}) \neq \emptyset$ and then, arguing along the same line as above, we conclude that $|\rho(G)| \leq|\pi(\beta_1)|+|\pi(\beta_2)|+|\pi(\beta_3)|$.
 \end{proof}

From the first inequality Theorem~\ref{KRS} we hence deduce the following corollary, which is Theorem~B of the Introduction.
 
 \begin{corollary}
  If \(G\) is solvable group, then $|\rho(G)| \leq 3\sigma(G)$. 
 \end{corollary}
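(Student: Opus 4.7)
The plan is to deduce this corollary directly from Theorem~\ref{KRS}, which has been tailored precisely for this purpose. Applying that result to the solvable group $G$, one obtains three irreducible characters $\beta_1,\beta_2,\beta_3\in\irr G$ satisfying
\[
|\rho(G)|\;\leq\;|\pi(\beta_1)|+|\pi(\beta_2)|+|\pi(\beta_3)|.
\]
Since, by the very definition of $\sigma(G)=\max\{|\pi(\chi)|:\chi\in\irr G\}$, each of the three quantities $|\pi(\beta_i)|$ is at most $\sigma(G)$, summing the three bounds gives the claim $|\rho(G)|\leq 3\sigma(G)$ in a single step.

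There is essentially no obstacle in this final deduction: all of the substantive work has already been packaged inside Theorem~\ref{KRS}, whose proof in turn rests on Proposition~\ref{p3} (providing two characters that cover $\pi(G/\fit G)$ up to a possible stray prime in $\{2,3\}$) and on a careful argument showing that the ``stray prime'' can always be absorbed thanks to the disjointness of the relevant prime sets or, when this absorption fails, a third character coming from a faithful character of a normal non-abelian Hall subgroup of $G$ can be used instead. In this sense, the heuristic behind the factor $3$ is that one character is needed to account for the primes attached to normal non-abelian Sylow subgroups of $G$, while two further characters handle the primes appearing in the quotient $G/\fit G$. Hence, once Theorem~\ref{KRS} is in hand, the corollary is a one-line consequence and no further combinatorial or character-theoretic input is required.
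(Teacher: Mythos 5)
Your deduction is correct and is exactly how the paper obtains the corollary: it is read off from the first inequality of Theorem~\ref{KRS} together with the trivial bound $|\pi(\beta_i)|\leq\sigma(G)$ for each $i$. No further comment is needed.
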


\section{A generalization of the solvable case}

We conclude our discussion by proving Theorem~C, which extends the inequality \(\rho(G)\leq 3\sigma(G)\) to all groups \(G\) such that \(\cent G{\E G}\) is solvable. As mentioned in the Introduction, this class of groups includes both the solvable groups and the groups with trivial Fitting subgroup.

First, we need an application of Theorem~\ref{key} which takes also into account Proposition~\ref{HTV}.

\begin{theorem}\label{keyHTV}
Let \(G\) be a group with \(\fit G=1\). Then there exist \(\alpha_1,\alpha_2,\alpha_3\in\irr{\fitg{G}}\) (not necessarily distinct) such that, setting \(m=\m{G/\fitg G}\), 
the following conclusions hold.
\begin{enumeratei}
\item \(\pi_{\geq m}(G)\subseteq \bigcup^2_{i=1} \big(\pi(\alpha_i)\cup \pi(G:I_G(\alpha_i))\big)\cup\pi(\alpha_3)\).
\item \(|\pr_m|\leq 2|\pi(G:I_G(\alpha_i))\cap\pr_m|\) for \(i=1,2\).
\end{enumeratei}
\end{theorem}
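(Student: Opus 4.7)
The plan is to argue by induction on $|G|$, splitting on the presence of a simple characteristic subgroup. When $G$ has none, Theorem~\ref{key} already supplies $\psi_1, \psi_2 \in \irr{\fitg G}$ fulfilling (a) and (b), and it suffices to set $\alpha_1 = \psi_1$, $\alpha_2 = \psi_2$, and $\alpha_3 = 1_{\fitg G}$; so the real content of the statement is in the opposite case, where Proposition~\ref{HTV} is the natural replacement and $\alpha_3$ is needed precisely to absorb the ``exceptional prime'' left uncovered by the first two characters.

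So I take a simple characteristic subgroup $S$ of $G$ and set $N = \cent G S$, which is characteristic in $G$ with $\fit N = 1$; since $S$ is a component centralized by every other component, $\fitg G = S \times \fitg N$. If $N = 1$, then $G$ is almost-simple with socle $S$ and $\fitg G = S$: Proposition~\ref{HTV} supplies $\chi_1, \chi_2 \in \irr S$, and I set $\alpha_i = \chi_i$, while $\alpha_3$ is chosen as an irreducible character of $S$ of degree divisible by the possible exceptional prime (the defining characteristic if $S \in \mathcal{L}(p)$, or $11$ if $S \cong J_1$), taking $\alpha_3 = 1_S$ otherwise. By Schreier's conjecture $\out S$ is solvable, so $m = \m{G/\fitg G} = 0$ and claim (b) is vacuous, while (a) is immediate from Proposition~\ref{HTV}. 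If instead $N \neq 1$, the inductive hypothesis applied to $N$ produces $\mu_1, \mu_2, \mu_3 \in \irr{\fitg N}$ fulfilling (a) and (b) for $N$ relative to $m_N = \m{N/\fitg N}$; at the same time, $G/N$ is almost-simple with socle $SN/N \cong S$, so Proposition~\ref{HTV} yields characters $\chi_1, \chi_2 \in \irr S$ and an optional $\xi \in \irr S$ carrying the exceptional prime. I set $\alpha_i = \chi_i \times \mu_i \in \irr{\fitg G}$ for $i = 1, 2$ and $\alpha_3 = \xi \times \mu_3$ (with $\xi = 1_S$ if no exceptional prime arises); Lemma~\ref{lm} applied to the chain $\fitg G \trianglelefteq SN \trianglelefteq G$, combined with Schreier's conjecture on $G/SN \leq \out S$, then yields $m = m_N$.

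The verification proceeds prime by prime. For (a), a prime $q \in \pi_{\geq m}(G)$ must lie in $\pi(S) \cup \pi(N) \cup \pi(G/SN)$: if $q \in \pi(S)$, Proposition~\ref{HTV} places it in $\pi(\chi_1) \cup \pi(\chi_2) \cup \pi(\xi) \subseteq \pi(\alpha_1) \cup \pi(\alpha_2) \cup \pi(\alpha_3)$; if $q \in \pi(N)$ and $q \geq m = m_N$, the inductive hypothesis places it in $\bigcup_{i=1,2}\bigl(\pi(\mu_i) \cup \pi(N: I_N(\mu_i))\bigr) \cup \pi(\mu_3)$, which is transported into the required union via the orbit-divisibility inclusion $\pi(N: I_N(\mu_i)) \subseteq \pi(G: I_G(\alpha_i))$; if $q \in \pi(G/SN) - \pi(S)$, Proposition~\ref{HTV} places it in $\pi(G/N: I_{G/N}(\chi_i)) = \pi(G: I_G(\chi_i)) \subseteq \pi(G: I_G(\alpha_i))$ (using that $N$ centralizes $S$, so $N \leq I_G(\chi_i)$). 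For (b), the chain $|\pr_m| = |\pr_{m_N}| \leq 2|\pi(N: I_N(\mu_i)) \cap \pr_{m_N}| \leq 2|\pi(G: I_G(\alpha_i)) \cap \pr_m|$ closes the argument. The main technical point to watch is the identity $m = m_N$, which hinges on Schreier's conjecture to suppress alternating sections inside the outer automorphism groups; this is exactly what allows the inductive data from $N$ to be transferred to $G$ without any loss.
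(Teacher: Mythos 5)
Your proof is correct and follows essentially the same route as the paper: both rest on the decomposition of $\fitg G$ as (product of simple characteristic subgroups) $\times$ $\fitg{C}$ for $C$ the centralizer of that product, with Theorem~\ref{key} supplying the two characters on the centralizer side and Proposition~\ref{HTV} (together with a character carrying the exceptional prime) on the simple side, the characters being assembled as outer products. The only difference is organizational --- the paper handles all simple characteristic subgroups at once via $M=S_1\times\cdots\times S_t$ and $C=\cent GM$, whereas you peel them off one at a time by induction --- and your explicit verifications (Schreier's conjecture giving $m=m_N$, and the divisibility $|N:I_N(\mu_i)| \mid |G:I_G(\alpha_i)|$) are exactly the steps the paper declares routine.
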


\begin{proof} Let us denote by \(M\) the subgroup generated by all the simple characteristic subgroups of \(G\), so that \(M=S_1\times\cdots\times S_t\) where the \(S_j\) are non-abelian simple groups, and set \(C=\cent G M\). Since \(\fit C=1\) and clearly \(C\) does not have any simple characteristic subgroup, we can apply Theorem~\ref{key} to the group \(C\): defining \(m_0=\m{C/\fitg C}\), there exist two irreducible characters \(\psi_1,\psi_2\in\irr{\fitg C}\) such that \[\pi_{\geq m_0}(C)\subseteq \bigcup^2_{i=1} \pi(\psi_i)\cup \pi(C:I_C(\psi_i))\quad {\textnormal{and}}\quad |\pr_{m_0}|\leq 2|\pi(C:I_C(\psi_i))\cap\pr_{m_0}|\]  for $i=1,2$.

Next we use Proposition~\ref{HTV} to get, for all \(j\in\{1,\ldots,t\}\), three irreducible characters \(\chi_1^{(j)},\chi_2^{(j)},\xi^{(j)}\) of \(S_j\) such that \[\pi(S_j)\subseteq\pi(\chi_1^{(j)})\cup\pi(\chi_2^{(j)})\cup\pi(\xi^{(j)})\quad {\textnormal{and}}\quad\pi(G/\cent G{S_j})-\pi(S_j)\subseteq\pi(G:I_G(\chi_i^{(j)})),\;i=1,2.\]

Consider now the irreducible characters \(\chi_i=\chi_i^{(1)}\times\cdots\times\chi_i^{(t)}\) (\(i\in\{1,2\}\)) and \(\xi=\xi^{(1)}\times\cdots\times\xi^{(t)}\) of \(M\); then, observing that \(\fitg G=M\times\fitg C\), define \(\alpha_1=\chi_1\times\psi_1\), \(\alpha_2=\chi_2\times\psi_2\), \(\alpha_3=\xi\times 1_{\fitg{C}}\). It is routine to check that \(\alpha_1,\alpha_2,\alpha_3\in\irr{\fitg G}\) satisfy (a); moreover, taking into account that either \(m=m_0\) or \(\pr_{m_0}\) is contained in \(\pi(G/MC)\), the characters \(\alpha_1\) and \(\alpha_2\) are easily verified to satisfy (b) as well.
\end{proof}

Finally, we prove Theorem~C.

 \begin{ThmC}
  Let $G$ be a group such that $\cent G{\E G}$ is solvable. Then $|\rho(G)| \leq 3\sigma(G)$. 
 \end{ThmC}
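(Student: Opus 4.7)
The approach is to combine Theorems~\ref{KRS} and~\ref{keyHTV} via Clifford theory. Set $K := \cent G{\E G}$, which is solvable and normal in $G$ by hypothesis, and let $\bar G := G/K$. If $\E G = 1$ then $G = K$ is solvable and Theorem~B closes the case, so we may assume $\E G \neq 1$. A preliminary step is to verify that $\fit{\bar G} = 1$, which brings $\bar G$ into the scope of Theorem~\ref{keyHTV}: if $F/K = \fit{\bar G}$, then $F$ is solvable and normal in $G$, so $[F, \E G]$ is a solvable normal subgroup of $\E G$ and hence contained in $\zent{\E G}$; since $\zent{\E G}$ centralizes $\E G$, we obtain $[F, \E G, \E G] = 1$, and the three-subgroup lemma together with the perfectness of $\E G$ yields $[F, \E G] = 1$, i.e., $F \leq K$.

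Next, I would apply Theorem~\ref{KRS} to the solvable group $K$ to get $\beta_1, \beta_2, \beta_3 \in \irr K$ with $\rho(K) \subseteq \bigcup_{i=1}^3 \pi(\beta_i) \cup \{p_0\}$ for some $p_0 \in \{2,3\}$ and $|\rho(K)| \leq \sum_i|\pi(\beta_i)|$, and in parallel apply Theorem~\ref{keyHTV} to $\bar G$ to get $\tilde\alpha_1, \tilde\alpha_2, \tilde\alpha_3 \in \irr{\fitg{\bar G}}$ satisfying items (a) and (b) there. Let $M \leq G$ be the preimage of $\fitg{\bar G}$ and inflate each $\tilde\alpha_i$ to $\alpha_i \in \irr M$, so that $\pi(G:I_G(\alpha_i)) = \pi(\bar G:I_{\bar G}(\tilde\alpha_i))$.

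The combining step then produces, for each $i$, a character $\chi_i \in \irr G$ whose degree is divisible by every prime in $\pi(\beta_i) \cup \pi(\alpha_i) \cup \pi(G:I_G(\alpha_i))$: pass to the $M$-inertia $T_i = I_M(\beta_i)$, extend $\beta_i$ to $\hat\beta_i \in \irr{T_i}$ (via Gallagher's theorem when $\beta_i$ extends to $T_i$, via a projective-representation twist otherwise), multiply $\hat\beta_i$ by an appropriate character of $T_i/K$ built from $\tilde\alpha_i$ (Gallagher in $T_i/K$) so that the resulting irreducible character of $T_i$ above $\beta_i$ has degree picking up $\pi(\alpha_i)$, and induce up to $G$; the Clifford correspondence carries the inertia primes $\pi(G:I_G(\,\cdot\,))$ into the degree as well. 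The three $\chi_i$ jointly cover $\rho(G)$ up to at most the single prime $p_0 \in \{2,3\}$, which is absorbed exactly as in the last paragraph of the proof of Theorem~\ref{KRS} (by exploiting overlap of prime sets or swapping one $\chi_i$ for a suitable constituent). Since each $\chi_i \in \irr G$ satisfies $|\pi(\chi_i)| \leq \sigma(G)$ by definition of $\sigma(G)$, one concludes $|\rho(G)| \leq \sum_i|\pi(\chi_i)| \leq 3\sigma(G)$.

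The \emph{main obstacle} is precisely the combining step: since $\alpha_i$ is trivial on $K$, no single irreducible character of $G$ lies above both $\beta_i$ and $\alpha_i$ in the naive Clifford sense, and the construction above must be carried out in the $M$-inertia $T_i$. Handling the case when $\beta_i$ does not extend to $T_i$ (requiring a projective-representation twist), checking compatibility of central characters on $\zent{\E G}$, and tracking how the primes of $\pi(\alpha_i)$ and of the $G$-inertia indices survive the induction back to $G$ are where the bookkeeping becomes delicate.
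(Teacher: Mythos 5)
Your overall strategy -- splitting $G$ over the solvable radical, applying Theorem~\ref{KRS} to it and Theorem~\ref{keyHTV} to the quotient, and recombining via Clifford theory -- is the same as the paper's, and your verification that $\fit{G/\cent G{\E G}}=1$ is correct. But the ``combining step'' that you yourself flag as the main obstacle is a genuine gap, not just delicate bookkeeping, and the construction you sketch does not close it. Your $\alpha_i$ is inflated from $\fitg{\bar G}=M/K$, so it is trivial on $K$, while $\beta_i$ lives on $K$; there is no subgroup of $G$ on which $\alpha_i$ and $\beta_i$ sit as a direct product, and the Gallagher/projective-twist manoeuvre in $T_i=I_M(\beta_i)$ does not obviously produce a character whose degree is divisible by $\alpha_i(1)$: the character of $T_i/K$ you would multiply by is not $\tilde\alpha_i$ itself (which is a character of $M/K$, possibly not $T_i$-invariant), and after inducing to $G$ there is no control over which primes of $\pi(\alpha_i)$ and of $\pi(G:I_G(\alpha_i))$ survive in the degree. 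The idea you are missing is the paper's preliminary reduction: setting $Z=\zent{\E G}=\E G\cap\fit G$, one shows $\rho(G/Z)=\rho(G)$ (using $Z\leq\frat G$ and a Schur-multiplier argument for the normal Sylow subgroups) and that the hypothesis passes to $G/Z$, so one may assume $Z=1$. Then $E=\E G$ and $R=\cent GE$ intersect trivially, $ER=E\times R$, the characters $\alpha_i$ from Theorem~\ref{keyHTV} are genuine characters of the \emph{subgroup} $E\cong ER/R=\fitg{G/R}$ rather than inflations from a quotient, and $\alpha_i\times\beta_j\in\irr{ER}$ makes literal sense; any $\chi\in\irr{G\mid\alpha_i\times\beta_j}$ then picks up $\pi(\alpha_i)\cup\pi(G:I_G(\alpha_i))\cup\pi(\beta_j)$ with no further work.

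A secondary issue: your closing count $|\rho(G)|\leq\sum_i|\pi(\chi_i)|$ presumes the three characters jointly cover $\rho(G)$ and that the stray prime $p_0\in\{2,3\}$ can be ``absorbed as in Theorem~\ref{KRS}''; but here the uncovered primes come from two independent sources (the prime $p$ of Theorem~\ref{KRS} and, when $m=\m{G/ER}\neq 0$, the primes of $\pr_m$ not covered by $\pi_1\cup\pi_2\cup\pi_3$), and the paper handles this by a case distinction on $m$, pairing a single well-chosen $\alpha$ with a single well-chosen $\beta$ and using the two-fold covering of $\pr_m$ by the inertia indices (part (b) of Theorem~\ref{keyHTV}) to conclude $|\rho(G)|\leq 3|\pi(\chi)|$ for one character $\chi$. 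As written, your final inequality is not justified.
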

 \begin{proof}
   Set $ E = {\bf E}(G)$ and $F = \fit G$.
   We know (see~\cite[6.5.2]{KS} ) that every solvable normal subgroup of $G$ centralizes $ E$. Hence, by assumption we have $\cent GE = R$,
   where $R$ is  the solvable radical of $G$. Let $Z = E \cap F = \zent E$.
   
 Our first claim is that \(\rho(G/Z)=\rho(G)\). So, take \(p\in\rho(G)\) and denote by \(P\) a Sylow \(p\)-subgroup of \(G\). We have $Z \leq  \zent E \cap E' \leq \frat E$, so $Z \leq \frat G$ and hence $F/Z = \fit{G/Z}$; it follows that if \(P\) is not normal in \(G\), then \(PZ/Z\) is not normal in \(G/Z\) and hence \(p\) lies in \(\rho(G/Z)\). On the other hand, if \(P\) is normal in \(G\) (and non-abelian), then \(E\cap P\) is contained in \(E\cap F= Z\), whence \(p\) does not divide \(|E/Z|\). As a consequence, \(p\) does not divide the order of the Schur multiplier \({\rm{M}}\) of \(E/Z\) and, in particular, \(p\nmid|Z|\) (in fact, note that \(E\) is a central extension of \(E/Z\) whose kernel \(Z\) lies in \(E'\), so \(Z\) embeds in \({\rm{M}}\) by \cite[Corollary 11.20(b)]{Is}). We conclude that \(PZ/Z\simeq P\) is non-abelian, so \(p\in\rho(G/Z)\). 
 
Furthermore, by Lemma~3.2 of~\cite{CDPS} we have $E/Z\subseteq\E{G/Z}$ (it can be  easily checked that actually equality holds) and $\cent{G/Z}{E/Z} = R/Z$; it follows that \(\cent{G/Z}{\E{G/Z}}\subseteq R/Z\) is solvable. Our conclusion after the last two paragraphs is  that we can assume \(Z=1\).
   
Note that now $Z = E \cap R=1$, so $ER = E \times R$. An application of Theorem~\ref{keyHTV} to the factor group \(G/R\) (whose Fitting subgroup is clearly trivial, and whose generalized Fitting subgroup is \(ER/R\)) yields the existence of \(\alpha_1, \alpha_2,\alpha_3\in\irr E\) such that, setting \(\pi_i=\pi(\alpha_i)\cup \pi(G:I_G(\alpha_i))\) for \(i\in\{1,2\}\) and \(\pi_3=\pi(\alpha_3)\), we have
 \[\pi_{\geq m}(G/R)\subseteq \pi_1\cup\pi_2\cup\pi_3\quad{\textnormal{and}}\quad |\pr_m|\leq 2|\pi(G:I_G(\alpha_i))\cap\pr_m|\quad{\textnormal{for }} i=1,2\]
 where \(m=\m{G/ER}\). Observe that, in general, the sets \(\pi_1\), \(\pi_2\) and \(\pi_3\)  do not cover together the whole \(\pi(G/R)\), but they clearly do so in the case \(m=0\); at any rate, it is not difficult to see that the two displayed formulas above imply \(|\pi({G/R})|\leq |\pi_1|+|\pi_2|+|\pi_3|\). 
 
Also, consider the characters \(\beta_1,\beta_2,\beta_3\in\irr R\) provided by an application of Theorem~\ref{KRS} to \(R\), such that \[\rho(R)\subseteq \pi(\beta_1)\cup\pi(\beta_2)\cup\pi(\beta_3)\cup\{p\}\quad{\textnormal{and}}\quad |\rho(R)|\leq|\pi(\beta_1)|+|\pi(\beta_2)|+|\pi(\beta_3)| .\] 
Note that $p$ lies in $\mathbb{P}_m$ if $m \neq 0$, as then $m \geq 5$. 

We are now ready to finish our proof. If $m \neq 0$,  we take $\alpha = \alpha_i$ such that $|\pi_i| = \max_{j=1,2,3}\{|\pi_j|\}$, and $\beta = \beta_i$ such that
  $|\pi(\beta) - \pi(G/R)| = \max_{j=1,2,3}\{|\pi(\beta_j) - \pi(G/R)|\}$.
If $m = 0$,  we take $\beta = \beta_i$ such that  $|\pi(\beta)| = \max_{j=1,2,3}\{|\pi(\beta_j)|\}$ and $\alpha = \alpha_i$ such that 
$|\pi_i - \pi(R)| = \max_{j=1,2,3}\{|\pi_j -\pi(R)|\}$.
One easily sees that, for any $\chi\in \irr{G|\alpha\times \beta}$,  in both cases we have
$|\rho(G)| \leq 3\pi(\chi) \leq 3\sigma(G).$  
 \end{proof}

{\bf Acknowledgment.} Some part of this research has been carried out during a visit of the first author at
the Dipartimento di Matematica e Informatica ``U. Dini" (DIMAI) of the University
of Firenze. She wishes to thank the DIMAI for the hospitality.


\end{document}